\newtheorem{conjecture}[theorem]{Conjecture}
\newcommand{\nc}{\newcommand}
\def\hf{{\textstyle{1\over2}}}
\newcommand{\C}{{\mathbb C}}
\newcommand{\N}{{\mathbb N}}
\newcommand{\R}{{\mathbb R}}
\nc{\sinc}{\operatorname{sinc}}
\nc{\sgn}{\operatorname{sgn}}
\nc{\supp}{\operatorname{supp}}
\nc{\Real}{\operatorname{Re}}
\nc{\Imag}{\operatorname{Im}}
\nc{\dif}{\operatorname{d}}
 \nc{\im}{\operatorname{i}}
\nc{\Hi}{{\mathscr{H}}^\infty} \nc{\Ht}{{\mathscr{H}}^2}
\nc{\Hone}{{\mathscr{H}}^1} \nc{\ol}{\overline} \nc{\bz}{\mathbf{z}}
\nc{\bw}{\mathbf{w}} \nc{\eps}{\varepsilon}
\title{On certain sums over ordinates of  zeta-zeros II}
\author{Andriy Bondarenko \&\ Aleksandar Ivi\'{c} \&\ Eero Saksman \&\ Kristian Seip}
\date{} 
\journal{Hardy-Ramanujan Journal -- (yyyy), ---} 
\begin{document}

\maketitle

\dedication{Dedicated to the memory of S. Srinivasan (1943 -- 2005)}


\thanks{Saksman's research was supported in part by the Lars Onsager Professorship at NTNU and in part by the Finnish Academy CoE ``Analysis and Dynamics''. The research of Bondarenko and Seip was supported in part by Grant 227768 of the Research Council of Norway.}

\begin{prelims}

\def\abstractname{Abstract}
\abstract{Let $\gamma$ denote the imaginary parts of  complex zeros $\rho = \beta+i\gamma$
of $\zeta(s)$. The problem of analytic continuation of the
 function $G(s) := \sum\limits_{\gamma > 0}\gamma^{-s}$ to the left of the line
 $\Re s = -1$ is investigated, and its Laurent expansion at the pole $s=1$
 is obtained.  Estimates for the second moment on the critical line $\int_1^T|G(\hf+it)|^2\dif t$ are revisited.
 This paper is a continuation of work begun by the second author in \cite{Iv}.}

\MSCclass{11M06} \\
\keywords{Riemann zeta-function, Riemann hypothesis, analytic continuation,
Laurent expansion, second moment}


\end{prelims}


\section{Introduction: the function $G(s)$}\label{se:G(s)}

We will be concerned with the Dirichlet series
\begin{equation}\label{eq:iv11}
G(s): = \sum_{\gamma > 0}\gamma^{-s},
\end{equation}
where $\gamma$ denotes the ordinates of the complex zeros of the Riemann zeta-function
$\zeta(s)$, counted as usual with multiplicities. Thus $G(s)$ converges
absolutely and represents a holomorphic function in the half-plane $\sigma = \Real s > 1$, in view of the classical
Riemann--von Mangoldt formula
\begin{equation}\label{eq:iv12}
N(T) = \frac{T}{2\pi}\log \frac{T}{2\pi} - \frac{T}{2\pi} + O(\log T)
\end{equation}
 for the zero counting function $N(T):= \sum_{0<\gamma\leqslant T}1$ (see \cite[p. 17]{I} or \cite[p. 214]{T}).
The condition $\gamma>0$ in \eqref{eq:iv11} is natural, since the property
$\overline{\zeta(s)} = \zeta(\bar{s})$ ensures that
$\beta-i\gamma$ is a zero of $\zeta(s)$ if $\beta+i\gamma$ is a zero.

The function $G(s)$ is mentioned in the work of Delsarte \cite{D} and,
in a perfunctory way, in the works of Chakravarty \cite{Ch1,Ch2}.  A related zeta function, namely
\[
\sum_{\gamma > 0}\gamma^{-s}\sin(\alpha\gamma)\qquad(\alpha > 0),
\]
was studied by Fujii \cite{Fu1}, but its properties are different from
those of $G(s)$, and we shall not consider it here. Both
Chakravarty and Delsarte (as well as Fujii) assume the Riemann hypothesis
(that all complex zeros of $\zeta(s)$ satisfy $\Real s = 1/2$)
when dealing with $G(s)$. Delsarte \cite[p. 430]{D} obtains its analytic continuation
to $\C$ by employing Poisson summation and a sort of
a modular relation.

We discuss analytic continuation of Delsarte's function $G(s)$ in  Section \ref{se:anco} We give a different proof of  Delsarte's result on the analytic continuation under the Riemann hypothesis. Moreover, we note that meromorphic extension to a larger half-plane $\sigma >-1-\varepsilon$ would have strong consequences related to the density hypothesis.
In Section \ref{se:laurent}, we provide an explicit Laurent expansion of $G(s)$ at $s=1$.
Section \ref{se:second} is devoted to estimating the second moment $\int_1^T|G(\hf+it)|^2\dif t$.
There  the growth of the second moment is connected explicitly to the
fluctuations of the function $S(t)$ (see \eqref{eq:iv14} below),
and we provide lower and upper bounds for the growth. Finally,
in Section \ref{se:mellin} we observe how the connection of $G(s)$
to the ``super zeta functions''  studied in the monograph
\cite{Vo}\footnote{The study of such zeta functions goes back
to Mellin \cite{M} in 1916 and seems not to be widely known.}  leads to yet another approach to Delsarte's result.

 \section{Analytic continuation of $G(s)$}\label{se:anco}

The second author introduced and studied $G(s)$ in \cite{Iv}.
Starting from the classical formula
\begin{equation}\label{eq:iv10}
N(T) = \frac{T}{2\pi}\log \frac{T}{2\pi} - \frac{T}{2\pi} +\frac{7}{8}+ S(T)+f(T),
\end{equation}
it was shown that, for $\sigma>0$ and a suitable constant $C_1$,
\begin{equation}\label{eq:iv13}
G(s) = {1\over2\pi(s-1)^2} - {\log(2\pi)\over2\pi(s-1)} + C_1
+ s\int_1^\infty\bigl(S(x) + f(x)\bigr)x^{-s-1}\dif x.
\end{equation}
Here, if $T$ is not an ordinate of a zero,
\begin{equation}\label{eq:iv14}
S(T) = {1\over\pi}\arg\zeta(\hf+iT) = {1\over\pi}\Imag\left\{
\log\zeta(\hf+iT)\right\} \ll \log T,
\end{equation}
where the argument of $\zeta(\hf+iT)$ is
obtained by continuous variation of the argument of $\zeta(s)$ along the straight lines joining the
points 2, $2+iT$, $\hf+iT$, starting with the value 0 at $s=2$. If $T$
is  an ordinate of a zero, then $S(T) = S(T+0)$. The function $f(T)$ is smooth
and admits an asymptotic expansion in terms of negative odd powers of $T$, namely
for any given integer $N \geqslant1 $ one has
\begin{equation}\label{eq:iv15}
f(T) = \sum_{j=1}^N \frac{a_j}{T^{2j-1}} + O_N\left(\frac{1}{T^{2N+1}}\right),
\end{equation}
where the $a_j$ are explicit constants, and $O_N$ means that the implied $O$-constant depends only on
$N$. From \eqref{eq:iv13}--\eqref{eq:iv15} one obtains the analytic continuation of $G(s)$ for $\sigma>0$.

Further analytic continuation
will follow by integrating the  integral in \eqref{eq:iv13} by parts.
This will give, for $\sigma > -1$ and a suitable constant $C_1$,
\begin{align}\label{eq:iv16}
G(s) &= \frac{1}{2\pi(s-1)^2} - \frac{\log2\pi}{2\pi(s-1)} + C_1
+ s\int_1^\infty f(x)x^{-s-1}\dif x \\
&\hbox{}\quad+ s(s+1)\int_1^\infty
\int_1^x S(u)\dif u\cdot x^{-s-2}\dif x,\nonumber
\end{align}
since we have the (unconditional) bound (see \cite[pp. 221--222]{T})
\begin{equation}\label{eq:iv17}
\int_0^T S(t)\dif t \;=\;O(\log T).
\end{equation}

To  study  the analytic continuation of $G(s)$ for $\sigma = \Real s \leqslant -1$ we need some notation.
Following \cite{Fu2}, we define
\[
{\tilde S}_0 (T) := S(T)\qquad(T\ne \gamma),
\]
and for $m\geqslant 1$,
\begin{equation}\label{eq:iv21}
{\tilde S}_m(T) := \int_0^T {\tilde S}
_{m-1} (t)\dif t + C_m,
\end{equation}
where $C_{2k} := (-1)^{k-1}/((2k)!4^k)$ and
\[
C_{2k-1} := \frac{(-1)^{k-1}}{\pi}\, \int_\frac12^\infty \int_{\sigma_1}^\infty\cdots
\int_{\sigma_{2k-2}}^\infty \log|\zeta(\sigma)| \dif \sigma_{2k-1} \cdots {\dif} \sigma_1
\]
for every positive integer $k$.
When $T =\gamma$, we put
$$
{\tilde S}_m(T) := \frac12\Bigl({\tilde S}_m(T+0) + {\tilde S}_m(T-0)\Bigr).
$$
On the Riemann hypothesis, it is known that
(see \cite[pp. 350--354]{T})
\begin{equation}\label{eq:iv22}
{\tilde S}_m(T) \;\ll_m \; \frac{\log T}{(\log\log T)^{m+1}}\qquad(m\geqslant1).
\end{equation}
Here $\ll_m $ means that the  constant implied by the $\ll$-symbol depends only on $m$. Note that
\begin{equation}\label{eq:iv23}
\int_1^\infty x^{-2j+1-s-1}\dif x \;=\; \frac{1}{2j+s-1}\qquad(\sigma > 1 -2j,\, j\in\N).
\end{equation}
Thus using \eqref{eq:iv15} and \eqref{eq:iv23}, we see
that
\[
s\int_1^\infty f(x)x^{-s-1}\dif x
\]
in \eqref{eq:iv16} represents a meromorphic function in $\C$, which has simple poles at $j = -1,-3, \ldots\;$.
Hence repeated integrations by parts of the last term in \eqref{eq:iv13} show, on using \eqref{eq:iv21}, that
under the Riemann hypothesis the function $G(s)$ admits meromorphic continuation to $\C$. Its only poles are:
$s = 1$ (order two) and $s = -1,-3, \ldots$ (order one).
\medskip

We next look at what are the consequences if we do not assume the Riemann hypothesis to hold,
but instead  \emph{assume} that $G(s)$ admits a meromorphic continuation across the line $\sigma=-1$.
The integration by parts argument used in the preceding discussion implies in particular that then the function
\begin{equation}\label{eq:H1}
H_1(s):=\int_1^\infty {\tilde S}_2(x)x^{-s-3}\dif x
\end{equation}
continues meromorphically across the same line. By \cite[Theorem 3]{Fu2}, we have
\begin{equation} \label{eq:St2}
{\tilde S}_2(T)\;=\; \sum_{\beta >1/2, 0<\gamma \leqslant T}(\beta -1/2)^2  \;\; +\; O(\log T) =A(T)\;\; +\; O(\log T),
\end{equation}
where we have set $A(T):=\sum_{\beta >1/2, 0<\gamma \leqslant T}(\beta -1/2)^2$. 
We therefore obtain that also the function
\[
H(s):=\int_1^\infty A(x)x^{-s-3}\dif x
\]
extends meromorphically across $\sigma=-1$.

Now let as usual $N(\sigma, T)$ be the number of zeros $\beta +i\gamma$ with
$\frac12 \leqslant \sigma<\beta<1$ and $0<\gamma\leqslant T$.
We may then write
\[ A(x)= 2\int_{1/2}^1 (\sigma-\textstyle\frac12) N(\sigma,x) \dif\sigma . \]
Namely the right-hand side above equals
\begin{align*}
2\int_{1/2}^1(\sigma-\textstyle\frac12)\sum\limits_{\gamma\leqslant x,\beta\geqslant\sigma\geqslant\frac12}1\dif\sigma
 & = 2\sum\limits_{\gamma\leqslant x}\int_{\frac12}^\beta(\sigma-\textstyle\frac12)\dif\sigma\\&
= \sum_{\gamma\leqslant x,\beta\geqslant\frac12}{(\beta-\textstyle\frac12)}^2 = A(x).
\end{align*}

By Selberg's uniform density estimate \cite[Theorem 9.19 (C)]{T},
\[
A(x)  \ll x \log x \int_0^{1/2} y x^{-y/4} \dif y  \ll \frac{x}{\log x}.
\]
Now
\[ \int_{e}^{\infty} \frac{x^{-\varepsilon-1}}{\log x}\dif x =\int_1^{\infty} \frac{e^{-\varepsilon u}}{u} \dif u
\leqslant \int_{1}^{1/\varepsilon}\frac{\dif u}{u}+\varepsilon
\int_{0}^{\infty}e^{-\varepsilon u}\dif u\leqslant \log \frac{1}{\varepsilon}+1.
\]
Hence $H(s)$ cannot have a pole at $s=-1$, and by our assumption it  thus has to be analytic at $s=-1$.  Then since $A$ is non-negative and increasing, a classical theorem of Landau \cite[Theorem 6, Chapter II.1]{Te}  implies that there is $\delta >0$ such that
\[ A(x)\ll x^{1-\delta}. \]
It follows from the definition of $A(x)$ and the fact that $\sigma\mapsto N(\sigma,x)$  is a decreasing function that
for $\sigma >1/2$,
\begin{equation}\label{eq:N}
 N(\sigma, T)\ll (\sigma-1/2)^{-2} T^{1-\delta},
 \end{equation}
with the implicit constant depending only on $\delta$.
For $\sigma$ close to $1/2$, this assertion is stronger than the density hypothesis
that $N(\sigma, T) \ll_\varepsilon T^{2-2\sigma+\varepsilon}\;(1/2 \leqslant \sigma \leqslant 1)$.

We summarize the preceding discussion as follows:

\begin{theorem}\label{th:iv1}
Unconditionally, $G(s)$ has meromorphic continuation at least
to $\sigma >-1$; it has only one pole in this half-plane, located at $s=1$ and of order two. If the Riemann hypothesis is true, then $G(s)$ extends meromorphically to $\C$ with additional poles that are all simple and located at $\,s = -1,-3, \ldots\,$.
If $G(s)$  continues meromorphically across $\sigma=-1$, then the strong implication \eqref{eq:N} holds for the density of the zeros off the critical line.
\end{theorem}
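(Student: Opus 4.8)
The proof assembles the three strands developed in the discussion preceding the statement. For the unconditional continuation I start from \eqref{eq:iv13}, valid for $\sigma > 0$, separate off the $f$-term, and integrate the $S$-part by parts once to reach \eqref{eq:iv16}. The unconditional bound \eqref{eq:iv17} makes the double integral there absolutely convergent, hence holomorphic, for $\sigma > -1$; and \eqref{eq:iv15} together with \eqref{eq:iv23} shows that $s\int_1^\infty f(x)x^{-s-1}\dif x$ is holomorphic for $\sigma > -1$ as well, its leftmost pole sitting at $s = -1$. The remaining terms in \eqref{eq:iv16} are rational, with a single pole at $s = 1$ of order two; this is the first assertion.

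Under the Riemann hypothesis I then iterate: each further integration by parts uses the next antiderivative $\tilde S_k$ from \eqref{eq:iv21} and pushes the half-plane of convergence one unit to the left, the boundary terms at infinity vanishing and the new integrals converging because \eqref{eq:iv22} gives $\tilde S_k(x) \ll_k \log x$ for $k \geqslant 1$. The only poles created along the way are those of the $f$-contribution, which by \eqref{eq:iv15} and \eqref{eq:iv23} are simple and located at $s = -1, -3, -5, \dots$; letting $k \to \infty$ yields the meromorphic continuation to $\C$ with the stated pole set.

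For the conditional density statement, assume $G$ extends meromorphically to a half-plane $\sigma > -1 - \eps$. Pushing the integration by parts one step further than \eqref{eq:iv16}, so that $\tilde S_1$ and then $\tilde S_2$ appear, one arrives at an identity of the shape $G(s) = R(s) + s(s+1)(s+2)\,H_1(s)$, valid for $\sigma > -1$, in which $R$ collects rational terms and the $f$-contribution and is meromorphic on $\sigma > -3$, while $H_1$ is as in \eqref{eq:H1}. Dividing by the polynomial factor shows $H_1$ is meromorphic on $\sigma > -1 - \eps$, and then by \eqref{eq:St2} so is $H(s) = \int_1^\infty A(x) x^{-s-3}\dif x$ in a neighbourhood of $s = -1$, since $H_1 - H$ is the Mellin transform of an $O(\log x)$ quantity and hence holomorphic for $\sigma > -2$. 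Selberg's uniform density estimate gives $A(x) \ll x/\log x$, so the elementary integral bound displayed just before the theorem yields $H(\sigma) = O\bigl(\log(1/(\sigma+1))\bigr)$ as $\sigma \to -1^+$; this is far too slow for a pole, so $H$ is holomorphic at $s = -1$. Since $A$ is non-negative and non-decreasing, Landau's theorem on the abscissa of convergence of such Mellin integrals then forces that abscissa strictly to the left of $s = -1$, so $\int_1^\infty A(x) x^{-s-3}\dif x$ converges at some $s = -1 - \delta$ with $\delta > 0$; an elementary monotonicity argument (comparing $A$ on $[x, 2x]$ with the tail of the integral) turns this into $A(x) \ll x^{1-\delta}$. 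Finally, from $A(x) = 2\int_{1/2}^1 (\sigma - \hf) N(\sigma, x)\dif\sigma$ and the fact that $\sigma \mapsto N(\sigma, x)$ is non-increasing one obtains $(\sigma - \hf)^2 N(\sigma, T) \leqslant A(T) \ll T^{1-\delta}$, which is \eqref{eq:N}.

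The first two assertions are essentially bookkeeping once \eqref{eq:iv16} and the estimates \eqref{eq:iv15}, \eqref{eq:iv17}, \eqref{eq:iv22} are in hand; the substance, and the step I expect to be the main obstacle, lies in the third. There one must verify that the assumed meromorphy of $G$ genuinely propagates through the repeated integration by parts — controlling the boundary terms and checking that dividing out the factor $s(s+1)(s+2)$, which vanishes at $s = -1$, introduces no spurious obstruction — and then apply Landau's theorem in the right normalization, most transparently by recasting $H$ as a Laplace transform of $u \mapsto A(e^u)$. The bound $A(x) \ll x/\log x$ already places the relevant abscissa of convergence at or to the left of $s = -1$, so it is precisely the holomorphy of $H$ at $s = -1$ — which the meromorphy hypothesis, via that same bound, provides — that pushes the abscissa strictly leftward and delivers the power saving.
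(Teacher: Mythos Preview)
Your proposal is correct and follows essentially the same route as the paper: the unconditional and RH parts are handled exactly as the paper does via \eqref{eq:iv16}, \eqref{eq:iv17}, \eqref{eq:iv15}, \eqref{eq:iv23} and iterated integration by parts with \eqref{eq:iv22}, and the conditional density argument reproduces the paper's chain $G \to H_1 \to H$, Selberg's bound to rule out a pole at $s=-1$, Landau's theorem, and the inequality $(\sigma-\tfrac12)^2 N(\sigma,T)\leqslant A(T)$. If anything you are slightly more explicit than the paper in tracking the polynomial prefactor $s(s+1)(s+2)$ and in justifying the passage from convergence of the integral to the pointwise bound $A(x)\ll x^{1-\delta}$.
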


\begin{remark}\label{rem:AnCo}
As noted in Section \ref{se:G(s)}, the meromorphic continuation of $G(s)$ to $\C$ on the Riemann hypothesis was already
obtained by Delsarte \cite{D}, but the above proof is simpler (see also Section~\ref{se:mellin} below).
The last assertion of Theorem~\ref{th:iv1} indicates that establishing meromorphic continuation across the line $\sigma=-1$ is likely to be very difficult.
\end{remark}

Notice that we may have meromorphic extension to the whole complex plane even if the Riemann hypothesis fails. This will for instance be the case if there are finitely many nontrivial zeros $\rho=\beta+i\gamma$ with $\beta>1/2$. To see this, we write \eqref{eq:St2} in the form $\tilde{S}_2(T)=A(T)+I_2(T)$. Plugging this into \eqref{eq:H1}, we see that the term $A(x)$  gives rise to an additional simple pole at $-2$, while the term $I_2(x)$, by repeated integration by parts and use of \cite[Theorem 1]{Fu2}, yields an entire function.

An elaboration of this observation, along the lines of our discussion of possible meromorphic continuation across $\sigma=-1$, shows that the distinctive feature of the meromorphic continuation obtained on the Riemann hypothesis is that $G(s)$ will be analytic on the segment  $[-2,-1)$. This leads to the following statement.

\begin{theorem}
The Riemann hypothesis holds if and only if $G(s)$ has meromorphic continuation to the half-plane $\sigma\geqslant -2$ with only two poles that are located at respectively $1$ and $-1$.
\end{theorem}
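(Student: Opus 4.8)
The plan is to prove both implications by tracking, along the segment $[-2,-1)$, the poles that the various integral representations can contribute, and then invoking the Landau-type positivity argument exactly as in the discussion preceding the statement. The forward implication is essentially contained in Theorem~\ref{th:iv1}: if RH holds, then $G(s)$ extends meromorphically to $\C$ with simple poles only at $-1,-3,\ldots$ in addition to the double pole at $s=1$; in particular on $\sigma\geqslant -2$ the only poles are at $1$ and $-1$, since $-3$ lies outside this half-plane and the segment $[-2,-1)$ is free of poles. So the real content is the converse.

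For the converse, assume $G(s)$ is meromorphic on $\sigma\geqslant-2$ with poles only at $1$ and $-1$; in particular $G$ is analytic on the segment $[-2,-1)$ and across $\sigma=-1$. By the integration-by-parts argument already used in the excerpt, meromorphic continuation of $G$ across $\sigma=-1$ forces the function $H_1(s)=\int_1^\infty \tilde S_2(x)x^{-s-3}\dif x$ to continue meromorphically across that line, and via \eqref{eq:St2} so does $H(s)=\int_1^\infty A(x)x^{-s-3}\dif x$. The first step is then to repeat verbatim the Selberg-density estimate $A(x)\ll x/\log x$ together with the elementary bound on $\int_e^\infty x^{-\e-1}(\log x)^{-1}\dif x$, which shows $H(s)$ has no pole at $s=-1$; combined with the hypothesis this gives analyticity of $H$ at $s=-1$, and then Landau's theorem (applicable because $A$ is nonnegative and nondecreasing) yields $A(x)\ll x^{1-\delta}$ for some $\delta>0$.

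The second and crucial step is to upgrade analyticity \emph{on the whole segment} $[-2,-1)$ (not merely at $-1$) to the conclusion $A(x)\equiv 0$, i.e.\ RH. Write, as in the excerpt, $\tilde S_2(T)=A(T)+I_2(T)$ where $I_2(T)=O(\log T)$ by \cite[Theorem 3]{Fu2}. Plugging into $H_1$, the $I_2$-part contributes, after repeated integration by parts and \cite[Theorem 1]{Fu2}, an entire function, so the location of the poles of $G$ in $[-2,-1)$ is governed entirely by $H(s)=\int_1^\infty A(x)x^{-s-3}\dif x$. If $A$ does not vanish identically then $A$ is eventually a positive nondecreasing step function, so $H(s)$ has a genuine pole at $s=s_0:=\sigma_A-2$, where $\sigma_A\in[1/2,1]$ is the abscissa of convergence of $\int_1^\infty A(x)x^{-s-1}\dif x$ — equivalently $\sigma_A=\sup\{\beta:\beta=\Real\rho\}$ for a zero $\rho$ off the line, at least when that supremum exceeds $1/2$; this is again Landau's theorem applied in the other direction, the abscissa of convergence being a singularity because the integrand is nonnegative. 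Since $\sigma_A>1/2$ under the failure of RH, we get $s_0=\sigma_A-2\in(-3/2,-1)\subset[-2,-1)$, a pole of $G$ inside the segment where we assumed analyticity — a contradiction. Hence $A\equiv 0$, every zero has $\beta=1/2$, and RH holds.

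I expect the main obstacle to be making the abscissa-of-convergence argument in the last step fully rigorous, in two respects. First, one must rule out the degenerate possibility that $A$ is a nonzero but \emph{bounded} step function (finitely many zeros off the line): in that case $\int_1^\infty A(x)x^{-s-3}\dif x$ converges for $\sigma>-2$ and the pole slides to $s_0=-2$, i.e.\ exactly to the boundary of the claimed region — this is why the statement asks for analyticity on $\sigma\geqslant-2$ rather than $\sigma>-2$, and one should note that the problematic pole at $-2$ is precisely the ``distinctive feature'' flagged in the paragraph before the theorem. Second, one must confirm that the entire-function contribution from $I_2$ and the meromorphic contributions from the $f$-integral and the lower-order $\tilde S_m$ terms in \eqref{eq:iv16} cannot cancel the pole coming from $A$; this follows because those contributions are independent of the off-line zeros and have poles only at the fixed points $-1,-3,\ldots$, whereas $\sigma_A-2$ is strictly between $-2$ and $-1$ when $1/2<\sigma_A<1$, so no cancellation is possible. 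Once these points are handled, everything else is a direct reprise of the arguments already spelled out in Section~\ref{se:anco}.
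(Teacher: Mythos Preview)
Your overall strategy mirrors the paper's sketch, and the Landau argument you run on the open segment $(-2,-1)$ is sound. The gap is exactly at the boundary point $s=-2$, the ``degenerate'' case you single out. You claim that singularities of $H(s)=\int_1^\infty A(x)x^{-s-3}\dif x$ pass directly to singularities of $G(s)$, but tracing the two integrations by parts carefully gives
\[
G(s)=\bigl(\text{poles at }1,-1,-3,\ldots\bigr)+\bigl(\text{entire}\bigr)+s(s+1)(s+2)\,H_1(s),
\]
so the relevant object is $s(s+1)(s+2)H(s)$, not $H(s)$. On $(-2,-1)$ the prefactor is nonzero and your argument works; at $s=-2$ it has a simple zero. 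Now if there are only finitely many zeros off the critical line, then $A(x)$ is eventually a positive constant $A_\infty$, whence $H(s)=A_\infty x_0^{-s-2}/(s+2)+(\text{entire})$ has only a \emph{simple} pole at $-2$, and $s(s+1)(s+2)H(s)$ is entire. Combined with the paper's own assertion that the $I_2$-contribution is entire, this makes $G$ regular on all of $[-2,-1)$ --- the very feature you (and the paper) want to be distinctive of RH. So requiring the closed half-plane $\sigma\geqslant-2$ does not rescue the argument: the pole of $H_1$ at $-2$ flagged in the paragraph before the theorem is a pole of $H_1$, not of $G$, and it is absorbed by the factor $(s+2)$. The same conclusion is visible from the super-zeta viewpoint of Section~\ref{se:mellin}: with finitely many off-line zeros, $G$ differs from $M_{1/2}(s)/(2\cos(\pi s/2))$ by an entire function and hence has poles only at odd integers.

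A smaller point: your identification $\sigma_A=\sup\{\beta:\beta>1/2\}$ is not correct. The abscissa of convergence of $\int A(x)x^{-s-3}\dif x$ is determined by the growth rate of $A(x)$, i.e.\ by the \emph{density} of off-line zeros weighted by $(\beta-1/2)^2$, not by the supremum of $\beta$. For instance, finitely many off-line zeros with $\beta$ arbitrarily close to $1$ still give bounded $A$ and abscissa $-2$. This does not affect the Landau step itself (which only needs $A\not\equiv0$ to produce a real singularity somewhere in $[-2,-1-\delta]$), but the formula should be dropped.
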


\section{ The Laurent expansion of $G(s)$ at $s=1$}\label{se:laurent}

From \eqref{eq:iv13} one can obtain the (unconditional) Laurent expansion of $G(s)$ at its pole $s=1$. Namely, denoting
$F(x) := S(x) + f(x)$ for brevity, we have for $|s-1|<1$,
\begin{align}\label{eq:iv31}
&\int_1^\infty F(x)x^{-s-1}\dif x = \int_1^\infty F(x)e^{-(s-1)\log x}x^{-2}\dif x
\;=\; \sum_{j=0}^\infty c_j(s-1)^j,
\end{align}
say, with
\begin{equation}\label{eq:iv32}
c_j \;:=\; \frac{(-1)^j}{j!}\int_1^\infty F(x)\log^jx\cdot x^{-2}\dif x\qquad(j = 0, 1, \ldots)\,.
\end{equation}
Here we could interchange integration and summation in view of absolute convergence; this follows
from the bound $F(x) \ll \log x$ which holds because $S(x) \ll \log x$. Writing the last expression in \eqref{eq:iv13} as
\[
(s-1)\int_1^\infty F(x)x^{-s-1}\dif x + \int_1^\infty F(x)x^{-s-1}\dif x
\]
and using \eqref{eq:iv31}, we obtain
\begin{theorem}\label{th:iv2}
The Laurent expansion of $G(s)$ at $s=1$ has the form
\[
G(s) = {1\over2\pi(s-1)^2} - {\log2\pi\over2\pi(s-1)} + \sum_{j=0}^\infty b_j(s-1)^j,
\]
where
\[
b_0 = C_1 + c_0,\quad b_j = c_j + c_{j-1}\quad(j = 1,2, \ldots),
\]
and the $c_j$ are given by \eqref{eq:iv32}.
\end{theorem}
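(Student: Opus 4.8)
The plan is to start from the already-established formula \eqref{eq:iv13} and simply substitute the power-series expansion of $x^{-(s-1)}$ in a neighbourhood of $s=1$. Writing $F(x):=S(x)+f(x)$, the last term in \eqref{eq:iv13} is $s\int_1^\infty F(x)x^{-s-1}\,\dif x$, and the factor $s$ should be rewritten as $s=(s-1)+1$ so that the series multiplying $(s-1)^{-2}$ and $(s-1)^{-1}$ is visibly separated from the holomorphic remainder. Thus I would first record the identity \eqref{eq:iv31}--\eqref{eq:iv32}, namely
\[
\int_1^\infty F(x)x^{-s-1}\,\dif x \;=\; \sum_{j=0}^\infty c_j(s-1)^j,\qquad c_j=\frac{(-1)^j}{j!}\int_1^\infty F(x)\log^j x\cdot x^{-2}\,\dif x,
\]
valid for $|s-1|<1$.

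The one genuine point to check is that the interchange of summation and integration leading to this identity is legitimate; this is where I would spend a sentence. It follows from absolute convergence: since $F(x)\ll\log x$ (because $S(x)\ll\log x$ by \eqref{eq:iv14} and $f(x)=O(1)$ by \eqref{eq:iv15}), for $|s-1|=:r<1$ we have
\[
\sum_{j=0}^\infty\frac{r^j}{j!}\int_1^\infty |F(x)|\log^j x\cdot x^{-2}\,\dif x \;\ll\; \int_1^\infty \log x\cdot x^{-2}\sum_{j=0}^\infty\frac{(r\log x)^j}{j!}\,\dif x \;=\;\int_1^\infty \log x\cdot x^{r-2}\,\dif x,
\]
which is finite for $r<1$. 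So Fubini applies and the series representation holds. This is the only analytic obstacle, and it is mild; the rest is bookkeeping.

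Finally I would assemble the Laurent expansion. Using $s=(s-1)+1$,
\[
s\int_1^\infty F(x)x^{-s-1}\,\dif x \;=\; (s-1)\sum_{j=0}^\infty c_j(s-1)^j + \sum_{j=0}^\infty c_j(s-1)^j \;=\; c_0 + \sum_{j=1}^\infty (c_j+c_{j-1})(s-1)^j,
\]
and substituting this together with the constant $C_1$ into \eqref{eq:iv13} gives
\[
G(s) = \frac{1}{2\pi(s-1)^2} - \frac{\log 2\pi}{2\pi(s-1)} + (C_1+c_0) + \sum_{j=1}^\infty (c_j+c_{j-1})(s-1)^j,
\]
which is exactly the claimed expansion with $b_0=C_1+c_0$ and $b_j=c_j+c_{j-1}$ for $j\geqslant 1$. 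Since the principal part has only the stated two terms, this also reconfirms that $s=1$ is a pole of order two, consistent with Theorem~\ref{th:iv1}. I expect no serious obstacle here — the main (and minor) subtlety is the justification of the term-by-term integration, which the growth bound $F(x)\ll\log x$ settles cleanly.
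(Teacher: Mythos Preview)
Your proposal is correct and follows essentially the same approach as the paper: expand $x^{-(s-1)}$ as a power series in $(s-1)$, justify the interchange of sum and integral via $F(x)\ll\log x$, and then split the factor $s=(s-1)+1$ to read off the coefficients. Your treatment is in fact slightly more explicit than the paper's, which merely asserts absolute convergence, whereas you carry out the estimate $\int_1^\infty \log x\cdot x^{r-2}\,\dif x<\infty$ for $r<1$.
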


\section{Estimates for the second moment}\label{se:second}

We will now provide upper and lower bounds for the second moment $$
\int_1^T |G(\hf+it)|^2 \dif t,
$$
thus continuing the investigations started in  \cite[Section 2]{Iv}.
To this end, we follow \cite{Iv} and write
\[ G(s)= \sum_{\gamma\leqslant X} \gamma^{-s} + R(s), \]
where
\[ R(s)=\int_{X}^{\infty} \frac{x^{-s}}{2\pi} \log \frac{x}{2\pi}\dif x + \int_{X}^{\infty} x^{-s}\dif (S(x)+f(x)). \]
Integrating by parts, we get
\begin{equation} \label{eq:R} R(s) = \frac{X^{1-s}}{2\pi(s-1)}  \log \frac{X}{2\pi} + \frac{X^{1-s}}{2\pi(s-1)^2}
-X^{-s}\big(S(X)+f(X)\big)+ s \int_X^{\infty} x^{-s-1}(S(x)+f(x))\dif x, \end{equation}
which to begin with is valid for $\sigma >1$, but which continues meromorphically for all $\sigma>0$.
For $s=1/2+it$, the integral on the right-hand side of \eqref{eq:R} may be written as
$\widehat g(t/2\pi)$, where $g(\xi):= e^{-\xi/2}\big(S(e^{\xi})+f(e^{\xi})\big)\chi_{[X,\infty)}(e^{\xi})$.
Hence elementary estimates and Parseval's identity  together with Selberg's bound
$\int_T^{2T}S^2(t)\dif t\ll T\log\log T$ yield
\[ \int_{1}^T \big|R(\hf+it)\big|^2 \dif t \ll X \log^2X
+ (T^2/X) \log\log X. \]
Choosing $X=T \sqrt{\log\log T} /\log T$ we get
\begin{equation}\label{R}
 \int_{1}^T \big|R(\hf+it)\big|^2 \dif t \ll T \log T \sqrt{\log\log T}.
 \end{equation}
As we  will see in \eqref{diag} below, $\int_{1}^T |\sum_{\gamma\leqslant X}
\gamma^{-1/2-it} |^2 \dif t \gg T \log^2 T$ when $X\geqslant T^b$ for some positive number $b$.
Hence the second moment of the partial sum $\sum_{\gamma\le X} \gamma^{-1/2-it}$ yields
the dominant term which needs to be studied in more detail.

Let $\psi$ be a symmetric function in  $C(\R)$ satisfying\footnote{We may for example choose
$\psi(x)=(\max (1-|x|),0)^2$.}
\[ 0\leqslant\psi(x)\leqslant\chi_{[-1,1]}(x)\quad \text{and}
\quad 0<c_1\leqslant (1+4\pi^2|\xi|)^2\widehat\psi(\xi)\leqslant c_2\] for all $\xi$ in $\R$.
Let $A$ be an arbitrary subset of $\{\gamma:\; 0<\gamma\leqslant T\}$. We find that
\begin{align*}
\int_{-T}^T\big|\sum_{\gamma \in A}\gamma^{-1/2-it}\big|^2\dif t & \geqslant
\int_{\R}\big|\sum_{\gamma \in A}\gamma^{-1/2-it}\big|^2\psi(t/T)\dif t
=T\sum_{\gamma,\gamma'\in A}(\gamma\gamma')^{-1/2}\widehat \psi( T\log(\gamma/\gamma')/2\pi)\\
& \asymp T\sum_{\gamma,\gamma'\in A}(\gamma\gamma')^{-1/2}\frac{1}{1+T^2(\log(\gamma'/\gamma))^2}.
\end{align*}
Fix an arbitrary constant $a>0$ and  replace $\psi(x)$ by  $\widehat \psi(x/(2\pi a))$ in the above computation.
We obviously obtain an upper estimate up to a constant, but as
  $\psi(a\cdot )\ll \widehat\psi(\cdot/(2\pi))\asymp  \widehat\psi(a\cdot/(2\pi))$, we infer that
\begin{align}\label{fourier}
\int_{-T}^T\big|\sum_{\gamma \in A}\gamma^{-1/2-it}\big|^2\dif t
&\asymp T\sum_{\gamma,\gamma'\in A}(\gamma\gamma')^{-1/2}\frac{1}{1+T^2(\log(\gamma'/\gamma))^2}\\
&\asymp T\sum_{\gamma,\gamma'\in A}(\gamma\gamma')^{-1/2}\psi\big(Ta\log(\gamma'/\gamma)\big)\label{fourier2} \\
&\asymp T\sum_{\gamma \in A}\gamma^{-1}\#\left\{\gamma'\in (0,T)\cap A\; :\; |\gamma'-\gamma|\leqslant \gamma/(aT)\right\}.\nonumber
\end{align}
As the parameter $a$ is arbitrary, we see that there is some flexibility in choosing the size of the ``window'' in \eqref{fourier2} yielding the pairs of ordinates that  contribute significantly to the second moment.

By considering  only   the diagonal  $\gamma=\gamma'$ in \eqref{fourier} and using the known average density of the ordinates, we obtain
\begin{equation}\label{diag}
\int_{-T}^T\big|\sum_{0<\gamma \leqslant X}\gamma^{-1/2-it}\big|^2\dif t\; \gg
\;T \sum_{0<\gamma\le T^{b}}\gamma^{-1}\gg T\log^2 T
\end{equation}
whenever $X\geqslant T^b$ for some exponent $b$ in $(0,1)$.

Our next goal is to express \eqref{fourier} in terms of either of the functions $N(t)$
and $S(t)$ in the special case when $A=\{ \gamma: 0<\gamma\leqslant X\}$.
We start with the basic formula involving $N(t)$.
\begin{lemma}\label{le:termsofS} Suppose that $100 \leqslant X\leqslant  T$. Then
\begin{equation}\label{N(t)}
\int_1^T\bigg|\sum_{0<\gamma\leqslant X}\gamma ^{-1/2-it}\bigg|^2\dif t
\; \asymp\;  \int_1^X\Bigg(\frac{N(t+t/T)-N(t)}{t/T}\Bigg)^2 \dif t .
\end{equation}
\end{lemma}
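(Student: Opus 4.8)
The plan is to start from the spectral identity \eqref{fourier} with $A=\{\gamma: 0<\gamma\leqslant X\}$ and the choice $a=1$, which gives
\[
\int_{-T}^T\bigg|\sum_{0<\gamma\leqslant X}\gamma^{-1/2-it}\bigg|^2\dif t
\;\asymp\; T\sum_{0<\gamma\leqslant X}\gamma^{-1}\,\#\bigl\{\gamma'\in(0,X):\ |\gamma'-\gamma|\leqslant \gamma/T\bigr\}.
\]
(The passage from $\int_{-T}^T$ to $\int_1^T$ is harmless: both sides are even, so the two-sided integral is twice the one-sided one up to the bounded contribution of $[-1,1]$, which is absorbed by $\asymp$ once we note the right-hand side of \eqref{N(t)} is $\gg \log^2 X \gg 1$ by \eqref{diag}.) The counting function inside the sum is exactly $N(\gamma+\gamma/T)-N(\gamma-\gamma/T)$, so the discrete sum is
\[
T\sum_{0<\gamma\leqslant X}\gamma^{-1}\bigl(N(\gamma+\gamma/T)-N(\gamma-\gamma/T)\bigr).
\]

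The next step is to replace this sum over zeros by an integral. Writing $M(t):=N(t+t/T)-N(t)$ for brevity, one converts $\sum_{\gamma}\gamma^{-1}M(\gamma)$ into $\int_1^X t^{-1}M(t)\,\dif N(t)$, i.e. into $\int_1^X t^{-1}M(t)\,\dif\bigl(\tfrac{t}{2\pi}\log\tfrac{t}{2\pi}\bigr) + \int_1^X t^{-1}M(t)\,\dif S(t)$ via \eqref{eq:iv10}. The main term has density $\dif N(t)\asymp \log t\cdot \dif t$, and since $M(t)\asymp (\log t)\cdot(t/T)$ on average (by \eqref{eq:iv12}), the dominant contribution is $\asymp T^{-1}\int_1^X t^{-1}M(t)\log t\,\dif t \asymp \int_1^X \bigl(M(t)/(t/T)\bigr)^2 (t/T)^2 t^{-1}(t/T)^{-1}\cdot\ldots$; more cleanly, one observes $M(t)\asymp (t/T)\log t$ so that $M(t)^2/(t/T)^2 \asymp \log^2 t \asymp M(t)\log t /(t/T)$, whence
\[
T\sum_{0<\gamma\leqslant X}\gamma^{-1}M(\gamma)\;\asymp\; \int_1^X \frac{M(t)^2}{(t/T)^2}\,\frac{\dif t}{t}\cdot\frac{t}{T}\cdot\frac{T}{1}
\;=\;\int_1^X\Biggl(\frac{N(t+t/T)-N(t)}{t/T}\Biggr)^2\dif t.
\]
Making this last chain rigorous — rather than heuristic — is really where the argument lives, and I would do it by a two-sided comparison: for the lower bound, restrict the sum over $\gamma$ to a sparse subfamily (one $\gamma$ per dyadic-type block) on which $M(\gamma)\gg (\gamma/T)\log\gamma$, using the Riemann–von Mangoldt formula \eqref{eq:iv12} to guarantee such $\gamma$ exist in every interval of length $\gg 1/\log$; for the upper bound, use $M(\gamma)\leqslant M(t)+O(1)$ for $t$ in a neighbourhood of $\gamma$ of comparable measure, together with Cauchy–Schwarz or simply the pointwise bound $M(t)\ll (t/T)\log t + 1$, to dominate $\sum_\gamma \gamma^{-1}M(\gamma)^2$-type quantities and fold them into the displayed integral.

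The technical obstacle — and the step I expect to consume most of the work — is the transition between the \emph{discrete} quantity $\sum_{\gamma}\gamma^{-1}M(\gamma)$ and the \emph{continuous} quantity $\int t^{-1}M(t)^2\,\dif t$, because $M$ is a difference of counting functions and hence genuinely rough: it is integer-valued, jumps by $O(\log t)$, and $M(\gamma)$ for $\gamma$ a zero need not reflect the local average of $M$. The resolution is to partition $[1,X]$ into consecutive intervals $I_k$ each containing a bounded, positive number of ordinates (possible since consecutive gaps are $\asymp 1/\log t$ on average, and one can merge short blocks); on each $I_k$ the functions $M(t)$ and $M(\gamma_k)$ differ by $O(1+ \text{oscillation of } N \text{ on } I_k)$, which one controls by \eqref{eq:iv12} and the trivial bound $N(t+h)-N(t)\ll h\log t + \log t$. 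Summing the per-block estimates and using $|I_k|\asymp 1/\log(\text{scale})\asymp (t/T)/M(t)$ where $M(t)\gg t/T\cdot\log t$ recovers the claimed $\asymp$; the error blocks where $M(t)$ is small (of order $1$ rather than $\gg (t/T)\log t$) contribute $O(\log^2 X)$ to the integral side and are absorbed. Once this block-decomposition bookkeeping is set up carefully, both inequalities in \eqref{N(t)} drop out, and the hypothesis $100\leqslant X\leqslant T$ is exactly what is needed to keep $t/T\leqslant 1$ (so the windows $[t,t+t/T]$ are short) and to keep $\log t$ bounded away from $0$ throughout.
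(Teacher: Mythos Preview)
Your starting point is correct and matches the paper: via \eqref{fourier} you reduce to
\[
T\sum_{0<\gamma\leqslant X}\gamma^{-1}\,\#\bigl\{\gamma':|\gamma'-\gamma|\leqslant \gamma/T\bigr\},
\]
and the passage from $\int_{-T}^T$ to $\int_1^T$ is harmless. The gap lies in how you convert this discrete sum into the integral on the right of \eqref{N(t)}.

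Both your heuristic chain and your block decomposition rely on the pointwise relation $M(t):=N(t+t/T)-N(t)\asymp (t/T)\log t$ (and later on the bound $M(t)\ll (t/T)\log t+1$). These are false in the regime at hand: since $t\leqslant X\leqslant T$, the Riemann--von Mangoldt formula gives only $M(t)=(2\pi)^{-1}(t/T)\log t+O(\log t)$, and the error term can dominate. The whole content of the lemma is precisely that the fluctuations of $M(t)$ away from its average govern the second moment; replacing $M$ by its average at any step collapses both sides to $\asymp T\log^2 X$ and destroys the two-sided comparison.

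Concretely, when you write $\sum_\gamma\gamma^{-1}M(\gamma)=\int t^{-1}M(t)\,\dif N(t)$ and then replace $\dif N(t)$ by $(2\pi)^{-1}\log t\,\dif t$, you substitute the \emph{global} average density for the \emph{local} density at scale $t/T$, which is $M(t)/(t/T)$. This erases the quadratic structure: a cluster of $n$ ordinates inside a window of length $\sim t/T$ contributes $\asymp t^{-1}n^2$ to the sum (each $\gamma$ in the cluster sees $\asymp n$ neighbours), and it is this $n^2$ that must be matched against the $M(t)^2$ in the integrand. Your proposed blocks, each containing a bounded number of ordinates and hence of length $\asymp 1/\log t$, are at the wrong scale to detect such clusters; the claimed identity $|I_k|\asymp (t/T)/M(t)$ again presupposes the invalid pointwise estimate.

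The paper's remedy is to partition $(1,X]$ at the scale dictated by \eqref{fourier}: take $b_k=X^{k/k_0}$ with $k_0=\lfloor T\log X\rfloor$, so $|I_k|=b_k-b_{k-1}\sim b_k/T$. Writing $n(I)$ for the number of ordinates in $I$, one then shows that \emph{both} sides of \eqref{N(t)} are $\asymp T\sum_k b_k^{-1}n(I_k)^2$. For the discrete sum this is immediate since each $\gamma\in I_k$ has $\#\{\gamma':|\gamma'-\gamma|\leqslant\gamma/T\}\asymp n(I_{k-1})+n(I_k)+n(I_{k+1})$. For the continuous integral one uses $M(t)\leqslant n(I_k)+n(I_{k+1})$ for $t\in I_k$ (upper bound) and, by cutting $I_k$ into thirds, produces a subinterval of $I_{k-1}\cup I_k$ of length $\geqslant |I_k|/3$ on which $M(t)\geqslant n(I_k)/3$ (lower bound). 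Neither the Stieltjes-integral manipulation nor blocks of size $\asymp 1/\log t$ supply this matching of scales.
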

\begin{proof}
We start from \eqref{fourier} in the form
\begin{equation} \label{fourier2} \int_{-T}^T\big|\sum_{0< \gamma \leqslant X}\gamma^{-1/2-it}\big|^2\dif t
\asymp T\sum_{1<\gamma\le X}\gamma^{-1}\#\left\{\gamma'\; :\; |\gamma'-\gamma|\leqslant \gamma/(aT)\right\}.
\end{equation}
Here we discarded the condition that $\gamma'\leqslant X$ in the sum on the right-hand side.
The right-hand side has thus been increased by a term of size $O((\log X)^2 T/X)$ which is
admissible because the left-hand side is $\gg T \log^2 X$.
We have also taken into account that $\gamma>0$ implies $\gamma>1$.

Set $k_0:= \lfloor T\log X\rfloor$ and $b_k:=X^{k/k_0}$. We divide $(1,X]$ into intervals $I_k=(b_{k-1},b_k]$ for $k=1,\ldots, k_0+1$;  it will be convenient to agree that $I_0=\emptyset.$  For any real interval $I$ let $n(I)$ denote the number of zeta zeros with ordinates in $I$. Then for an arbitrary $\gamma$ in $I_k$ we obtain
\[
\#\big\{\gamma'\in (0,T)\; :\; |\gamma'-\gamma|< \gamma/T\big\}\; \leqslant\;  n(I_{k-1})+n(I_k)+n(I_{k+1}).
\]
It follows that
\begin{align*}
\sum_{\gamma\in (b_{k-1},b_k]}\gamma^{-1}\#\big\{\gamma'\in (0,T)\; :\; |\gamma'-\gamma|\leqslant \gamma/T\big\}
\quad \leqslant \quad  &b_{k-1}^{-1}n(I_k)\big(n(I_{k-1})+n(I_k)+n(I_{k+1})\big)
\\ \ll \;\;\;&b_{k-1}^{-1}n(I_{k-1})^2+b_{k}^{-1}n(I_{k})^2+b_{k+1}^{-1}n(I_{k+1})^2.
\end{align*}
On the other hand, we clearly have
$$
\sum_{\gamma\in (b_{k-1},b_k]}\gamma^{-1}\#\big\{\gamma'\in (0,T): |\gamma'-\gamma|\leqslant 2\gamma/T\big\}
\;\geqslant \;  b_{k}^{-1}n(I_{k})^2.
$$
Summing  both the upper and lower estimates over $k$ and using \eqref{fourier2}, we get
\begin{equation}\label{help6}
\int_{-T}^T\big|\sum_{0< \gamma \leqslant X}\gamma^{-1/2-it}\big|^2\dif t \; \asymp\;
 T\sum_{k}b_k^{-1}n(I_{k})^2.
\end{equation}
We may neglect the first interval and consider $I_k$ with $k\geqslant 2$.
Divide each $I_k$ into three equally long subintervals. One of these three
intervals contains at least $n(I_{k})/3$ ordinates, and there is then a subinterval
 $I'_k$ of $I_k\cup I_{k-1}$ with  $|I'_k|\geq |I_k|/3$ such that
 $ N(t+t/T)- N(t)\geq n(I_{k})/3$ for every $t$ in $I'_k$.
 Since $|I_k| \sim b_k/T\asymp t/T$ for $t$ in $2I_k$ we thus obtain
\begin{align*}
Tb_{k}^{-1}n(I_{k})^2\; & \ll \; Tb_k^{-1}|I_k| ^{-1}\int_{I_{k-1}\cup I_k}( N(t+t/T)- N(t))^2)\dif t \\
& \ll \;\int_{I_{k-1}\cup I_k}( N(t+t/T)- N(t))^2)\frac{\dif tT^2}{t^2}
\ll \; Tb_{k}^{-1}(n(I_{k-1})+n(I_{k})+n(I_{k+1}))^2,
\end{align*}
where the last inequality follows on noting that $ N(t+t/T)- N(t)\leqslant n(I_{k})+n(I_{k+1})$ for
interior points $t$ in $I_k$. Summing over $k$ and using \eqref{help6}, we finally arrive at \eqref{N(t)}.
\end{proof}
We may alternatively replace $N(t)$ by $S(t)$ in \eqref{N(t)}:
\begin{align}\label{S(t)}
\int_1^T\bigg|\sum_{0<\gamma\leqslant X}\gamma ^{-1/2-it}\bigg|^2\dif t
\; \asymp\;  \int_1^X\Bigg(\frac{S(t+t/T)-S(t)}{t/T}\Bigg)^2 \dif t .
\end{align}
Indeed, by \eqref{eq:iv10}, \eqref{N(t)} implies that
\[ \int_1^T\bigg|\sum_{0<\gamma\leqslant X}\gamma ^{-1/2-it}\bigg|^2\dif t
\; \ll \; \int_1^X\Bigg(\frac{S(t+t/T)-S(t)}{t/T}\Bigg)^2 \dif t + X \log^2 X. \]
Since the left-hand side is $\gg$ $T\log^2 X$, we get that the second
term on the right-hand side can be removed. Since \eqref{eq:iv10} implies that
\[
N(t+t/T) - N(t) \geqslant S(t+t/T)-S(t),
\]
it is also clear that \eqref{N(t)} trivially implies
\[
\int_1^T\bigg|\sum_{0<\gamma\leqslant X}\gamma ^{-1/2-it}\bigg|^2\dif t
\; \gg \;  \int_1^X\Bigg(\frac{S(t+t/T)-S(t)}{t/T}\Bigg)^2 \dif t .
\]

The above lemma as expressed by \eqref{S(t)} shows that the asymptotics
that we are interested in, depends crucially on the oscillations of $S(t)$ in rather small intervals.
To gain more insight  into this asymptotics, we will next take a closer look at  the size of the dyadic parts
\[
D(x):=\int_1^T \Big|\sum_{x< \gamma \leqslant  2x} \gamma^{-1/2-it} \Big|^2 \dif t .
\]
\begin{proposition}\label{pr:dyadic} {\rm (i)}\quad Assume that $T/\log T\leq x\leqslant T$. Then
$$
x\log^2 T\; \ll\;  D(x)\; \ll \; T\log^2T.
$$
\noindent  {\rm (ii)}\quad Assume that $100\leqslant x< T/\log T.$ Then
$$
T\log x\; \ll \; D(x)\; \ll \; T\log x \sqrt{\log\log x}.
$$
\end{proposition}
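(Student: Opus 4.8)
The plan is to exploit the dyadic analogue of Lemma~\ref{le:termsofS}: by the very same argument (restricting $\psi$ to the window and summing over the dyadic block $x<\gamma\leqslant 2x$ in place of $0<\gamma\leqslant X$) one obtains
\[
D(x)\;\asymp\;\int_x^{2x}\!\left(\frac{S(t+t/T)-S(t)}{t/T}\right)^{\!2}\dif t\;\asymp\;\frac{T^2}{x^2}\int_x^{2x}\!\bigl(S(t+t/T)-S(t)\bigr)^2\,\dif t,
\]
since $t/T\asymp x/T$ on the block. Equivalently, via \eqref{eq:iv10}, $D(x)\asymp (T/x)^2\int_x^{2x}(N(t+t/T)-N(t))^2\dif t$; writing $h=h(t):=t/T$ for the window length (so $h\asymp x/T$), the quantity to control is the mean square of $N(t+h)-N(t)$, the number of ordinates in a short interval of length $\asymp x/T$. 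I would record this reduction first, then split into the two regimes according to whether $x/T$ is $\gtrsim 1$ or not, i.e. whether the typical window is long enough to contain ordinates on average (recall the density is $\sim\frac{1}{2\pi}\log t$ per unit length).

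\emph{Regime (i): $T/\log T\leqslant x\leqslant T$.} Here the window $h\asymp x/T$ satisfies $1/\log T\lesssim h\lesssim 1$, so on average $N(t+h)-N(t)\asymp h\log t\asymp (x/T)\log T$, which is of constant order or larger. For the \emph{upper} bound I would use the diagonal-dominated estimate already available: from \eqref{help6}/\eqref{fourier} the diagonal contributes $\asymp T\sum_{x<\gamma\leqslant 2x}\gamma^{-1}\asymp T\log T\cdot(\text{count})/x$, and bounding the off-diagonal by the same Montgomery--Vaughan / mean-value input used in \eqref{R} gives $D(x)\ll T\log^2 T$; more directly, $D(x)\leqslant\int_1^T|\sum_{0<\gamma\leqslant 2x}|^2\leqslant$ the full second moment, which is $\ll T\log^2 T$ by \eqref{R} combined with \eqref{diag}-type bounds — but cleanest is to invoke the already-proven asymptotic for the whole sum and monotonicity in the block. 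For the \emph{lower} bound I take only the diagonal terms in \eqref{fourier}: $D(x)\gg T\sum_{x<\gamma\leqslant 2x}\gamma^{-1}\gg T\cdot\frac{1}{x}\cdot\frac{x}{2\pi}\log x\cdot\frac{1}{?}$ — more carefully, $\#\{x<\gamma\leqslant 2x\}\asymp x\log x$ by \eqref{eq:iv12}, so $\sum_{x<\gamma\leqslant 2x}\gamma^{-1}\asymp x^{-1}\cdot x\log x=\log x\asymp\log T$, giving $D(x)\gg T\log T$; to upgrade this to $x\log^2 T$ I would instead keep the near-diagonal pairs within the window of length $x/T$: for each $\gamma$ there are $\asymp (x/T)\log T$ such $\gamma'$ typically, so $D(x)\gg T\sum_{x<\gamma\leqslant 2x}\gamma^{-1}\cdot(x/T)\log T\asymp T\cdot\log T\cdot(x/T)\log T=x\log^2 T$, where one passes from ``typically'' to a genuine lower bound by a Cauchy--Schwarz / second-moment argument applied to $N(t+t/T)-N(t)$ on the block (its mean is $\asymp(x/T)\log T$, its second moment controls $D(x)$ from below). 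This last point is the main technical obstacle in (i): one needs that $N(t+t/T)-N(t)$ is not too often much smaller than its mean, which follows from $\int_x^{2x}(N(t+h)-N(t))\dif t\asymp hx\log x$ together with the trivial upper bound $N(t+h)-N(t)\ll h\log t+\log t\ll\log T$ (using $h\lesssim 1$), so the mean square is $\gg(\text{mean})^2\asymp (x/T)^2\log^2 T$ and $D(x)\gg(T/x)^2\cdot x\cdot(x/T)^2\log^2 T=x\log^2 T$.

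\emph{Regime (ii): $100\leqslant x<T/\log T$.} Now $h\asymp x/T<1/\log T$, so the typical window contains \emph{no} ordinate; the count $N(t+h)-N(t)$ is $0$ or $1$ for most $t$, occasionally larger near clustered zeros. For the \emph{lower} bound $D(x)\gg T\log x$ I again use the diagonal in \eqref{fourier}: $D(x)\gg T\sum_{x<\gamma\leqslant 2x}\gamma^{-1}\cdot 1\asymp T\log x$ — here even the bare diagonal suffices since the window is too short to give extra same-block pairs. For the \emph{upper} bound $D(x)\ll T\log x\sqrt{\log\log x}$ I return to $D(x)\asymp(T/x)^2\int_x^{2x}(S(t+t/T)-S(t))^2\dif t$ and expand the square: $\int_x^{2x}S(t+h)^2+\int_x^{2x}S(t)^2-2\int_x^{2x}S(t+h)S(t)$. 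The first two are $\ll x\log\log x$ by Selberg's bound $\int_T^{2T}S^2\ll T\log\log T$; the cross term I bound by Cauchy--Schwarz by the same $\ll x\log\log x$, but that only yields $D(x)\ll(T/x)^2\cdot x\log\log x=(T^2/x)\log\log x$, which is \emph{too weak} (it misses a factor $x^2\log x/(T^2/x\cdot\log\log x)$ of cancellation). The right move, and the crux of (ii), is \emph{not} to expand the square but to use the localisation: write $S(t+h)-S(t)=\int_t^{t+h}\dif S=\int_t^{t+h}\bigl(\frac{1}{2\pi}\log\frac{u}{2\pi}+f'(u)\bigr)\dif u+\bigl(N(t+h)-N(t)-\text{smooth}\bigr)$ is not quite it either; rather, I would bound $\int_x^{2x}(S(t+h)-S(t))^2\dif t$ by combining (a) the $L^1$ bound $\int_x^{2x}|S(t+h)-S(t)|\dif t\ll\int_x^{2x}(N(t+h)-N(t))\dif t+h x\log x\ll hx\log x+x\ll x$ (since $h\log x\ll 1$ in this regime and using \eqref{eq:iv17} for the $S$-primitive) with (b) an $L^\infty$-type input $|S(t+h)-S(t)|\ll\log t$, giving $\int_x^{2x}(S(t+h)-S(t))^2\ll\log x\cdot\int_x^{2x}|S(t+h)-S(t)|\ll x\log x$ and hence $D(x)\ll(T/x)^2\cdot x\log x=(T^2/x)\log x$ — still not matching. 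Matching $T\log x\sqrt{\log\log x}$ forces a genuinely sharper second-moment bound for $N(t+t/T)-N(t)$: one needs $\int_x^{2x}(N(t+h)-N(t))^2\dif t\ll hx\log x\sqrt{\log\log x}$ (for $h\asymp x/T$), i.e. the mean square is $\asymp$ mean times $\sqrt{\log\log x}$. This I would obtain by writing $N(t+h)-N(t)=(h/2\pi)\log(t/2\pi)+(S(t+h)-S(t))+O(h/t)$, so the mean square splits into the smooth part ($\ll h^2\log^2 x\cdot x\ll (h\log x)(hx\log x)\ll hx\log x$ since $h\log x\ll 1$) plus $\int_x^{2x}(S(t+h)-S(t))^2\dif t$; the latter is bounded by $4\int_x^{2x}S(t)^2\dif t\ll x\log\log x$ crudely, but I want $x\cdot h\cdot\log x\cdot\sqrt{\log\log x}=x(x/T)\log x\sqrt{\log\log x}$, which is \emph{smaller} when $x/T$ is small. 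Getting this requires using the near-independence of $S$ at scale $h$: specifically $\int_x^{2x}(S(t+h)-S(t))^2\dif t\ll h\cdot\sup_{|\delta|\leqslant h}\bigl|\tfrac{d}{d\delta}\int_x^{2x}S(t)S(t+\delta)\dif t\bigr|$ — better, I would run Selberg's moment method directly on $S(t+h)-S(t)$: its mean square over $[x,2x]$ is $\asymp$ the variance of $\arg\zeta$ over a window of length $h$ at height $\asymp x$, which by the explicit formula for $S$ and $h\asymp x/T$ small is $\asymp h\log x$ up to $\sqrt{\log\log x}$ losses coming from the prime-sum tail exactly as in Selberg's $\int S^2\ll T\log\log T$. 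So the main obstacle, and the step I would spend most effort on, is this sharp localised second-moment estimate for $S$ in very short intervals; granting it, both inequalities in (ii) follow by the reduction above, and the $\sqrt{\log\log x}$ is precisely the (presumably removable, but here harmless) inefficiency inherited from Selberg's bound rather than the conjectured $\log\log$-free truth.
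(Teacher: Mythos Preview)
Your lower bounds in both parts are fine and essentially match the paper's approach: both follow from Cauchy--Schwarz applied to the sum $\sum_j n(I_j)$ (equivalently, to the mean of $N(t+t/T)-N(t)$ over the block). Your upper bound in~(i) is sketched imprecisely: the inequality ``$D(x)\leqslant \int_1^T|\sum_{0<\gamma\leqslant 2x}\gamma^{-1/2-it}|^2\dif t$'' is false pointwise, and appealing to ``the already-proven asymptotic for the whole sum'' is circular because Theorem~\ref{th:bounds} is deduced \emph{from} Proposition~\ref{pr:dyadic}. Your parenthetical mention of Montgomery--Vaughan is the right idea; the paper carries it out cleanly by splitting the ordinates in $(x,2x]$ into $O(\log T)$ sequences that are $1$-separated and applying \cite[Corollary~2]{MV} to each.

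The genuine gap is the upper bound in~(ii). Your first two attempts (expanding the square; $L^1$--$L^\infty$ interpolation) give only $(T^2/x)\log\log x$ or $(T^2/x)\log x$, which you correctly diagnose as too weak by a factor of roughly $T/(x\log x)$. Your third attempt asserts that ``Selberg's moment method directly on $S(t+h)-S(t)$'' yields $\int_x^{2x}(S(t+h)-S(t))^2\dif t\ll hx\log x\sqrt{\log\log x}$ for $h\asymp x/T$, but you do not prove this, and it is precisely the substance of the proposition. The difficulty is that the contribution to $\sum_j n(I_j)^2$ is dominated by the clustered intervals with $n(I_j)$ large, and controlling these clusters is not a matter of running the standard Selberg argument over a shifted window.

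The paper's route is combinatorial rather than moment-theoretic. One groups the $I_j$ into maximal runs $J_\ell$ on which the zero density stays above $\log x$; each $J_\ell$ has $|J_\ell|\ll 1$ and $n(J_\ell)\sim|J_\ell|\log x$, and $\sum_j n(I_j)^2\leqslant\sum_\ell n(J_\ell)^2$. The key input is Lemma~\ref{lem:S}: whenever $n(I)\geqslant |I|\log x$ on a subinterval $I\subset[x,2x]$, one has $|S(t)|\geqslant n(I)/8$ on a set of measure $\gg |I|$ inside $2I$. After a Vitali-type covering to handle overlaps of the doubled intervals $2J_\ell$, this converts $\sum_\ell n(J_\ell)^2\asymp\sum_\ell(\log x)^2|J_\ell|^2$ into $\log x\cdot\int_x^{2x}|S(t)|\dif t$, and the latter is $\ll x\log x\sqrt{\log\log x}$ by Cauchy--Schwarz and Selberg's mean-square bound. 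Combining with $D(x)\asymp (T/x)\sum_j n(I_j)^2$ gives the stated upper bound. The idea you are missing is this conversion of cluster sizes back into large values of $|S|$ via Lemma~\ref{lem:S}; a direct second-moment attack on $S(t+h)-S(t)$ does not see the cluster structure and loses the crucial factor.
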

\begin{proof}[Proof of part (i) of Proposition~\ref{pr:dyadic}]
We may assume that $T\gg 1$ is an integer.  Divide the interval $(x,2x]$ into
$T$ subintervals $I_j:=(x+(j-1)x/T, x+jx/T]$ for $1\leqslant j \le T$. By \eqref{fourier},
arguing as in the proof of  \eqref{help6}, we then get
\begin{equation} \label{eq:sum}
D(x)\;\asymp\;\; \frac{T}{x}\sum_{x\leqslant \gamma<2x} \left| \left\{\gamma': |\gamma-\gamma'|
\leqslant \frac{x}{T} \right\}\right| \asymp \frac{T}{x}\sum_{j=1}^T n(I_j)^2.\
\end{equation}
Since we know that $\sum_{j=1}^T n(I_j)=n([x,2x))\asymp x\log x $,
an application of the Cauchy--Schwarz inequality to \eqref{eq:sum}  yields immmediately the lower bound
$D(x)\leqslant x\log^2x\asymp x\log^2 T.$  To obtain the upper bound,
we divide the ordinates in $[x,2x)$ into  $O\left(\log T\right)$ 1-separated sets,
and apply the Montgomery--Vaughan inequality \cite[Corollary 2]{MV} to each of the corresponding sums.
\end{proof}
In the proof of  the upper bound of part (ii), we will use the following lemma.
\begin{lemma}\label{lem:S}
There is a universal constant $c>0$ such that  if $n(I)\geqslant |I| \log x$ for a  subinterval  $I=[a,b]$ of $[x, 2x+x/T]$, then
\[ \left|\left\{t\in 2I: |S(t)|\geqslant n(I)/8 \right\}\right|\geqslant c|I|. \]
\end{lemma}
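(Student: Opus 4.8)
The plan is to argue by contradiction, showing that if $S$ were small throughout $2I$, then the counting function $N(t) = \frac{t}{2\pi}\log\frac{t}{2\pi} - \frac{t}{2\pi} + \frac78 + S(t) + f(t)$ could not increase by as much as $n(I)$ across the interval $I$. Concretely, suppose the set $E := \{t\in 2I : |S(t)|\geqslant n(I)/8\}$ has measure less than $c|I|$ for a small absolute constant $c$ to be chosen. Write $I=[a,b]$, so $n(I) = N(b)-N(a)$ (up to the harmless convention at ordinates). We split $N(b)-N(a)$ into the smooth main term $M(b)-M(a)$, where $M(t) = \frac{t}{2\pi}\log\frac{t}{2\pi}-\frac{t}{2\pi}+\frac78$, plus $S(b)-S(a)$, plus $f(b)-f(a)$. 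The point is that $n(I)\geqslant |I|\log x$ is, by comparison with the derivative $M'(t) = \frac{1}{2\pi}\log\frac{t}{2\pi} \asymp \log x$ on $[x,2x+x/T]$, larger by a fixed factor than the smooth contribution: $M(b)-M(a) = \int_a^b M'(t)\,\dif t \leqslant C_0 |I|\log x$ for an absolute $C_0$, while $f(b)-f(a) = O(|I|/x^2) = o(|I|)$ by \eqref{eq:iv15}. Hence, choosing the implicit constant in the hypothesis $n(I)\geqslant |I|\log x$ — or rather verifying that the universal constant $c$ works for \emph{this} normalization — we find $S(b)-S(a) \geqslant n(I) - C_0|I|\log x - o(|I|)$.

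The subtlety is that $n(I)\geqslant |I|\log x$ only guarantees $S(b)-S(a)\gtrsim n(I)$ once we know $n(I)$ genuinely dominates $|I|\log x$; but the hypothesis as stated gives only $n(I) \geqslant |I|\log x$, which need not beat $C_0|I|\log x$. I would resolve this by working instead with the monotonicity of $N$ directly: since $N$ is nondecreasing and jumps by $n(I)$ across $I$, and $M+f$ is Lipschitz on $2I$ with constant $O(\log x)$, at \emph{every} point $t$ of a subinterval $J\subseteq I$ of length $\geqslant |I|/2$ one has, by a pigeonhole/mean-value argument, that $S$ must make up a definite fraction of the jump somewhere — more precisely, there exist points $t_1<t_2$ in $I$ with $t_2 - t_1 \leqslant \eta|I|$ (any small $\eta$) across which $N$ increases by at least, say, $n(I)/4$, forcing $S(t_2)-S(t_1)\geqslant n(I)/4 - O(\eta|I|\log x) \geqslant n(I)/8$ once $\eta$ is small and using $n(I)\geqslant |I|\log x$. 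Then either $|S(t_1)|\geqslant n(I)/16$ or $|S(t_2)|\geqslant n(I)/16$; sliding this pair of points along $I$ (there are $\gtrsim 1/\eta$ disjoint such sub-blocks) produces $\gtrsim 1/\eta$ points where $|S|\geqslant n(I)/16$. To upgrade from a set of points to a set of positive measure, I would use that $S$ changes sign only at ordinates and is otherwise monotone-ish between consecutive zeros: on each interval between consecutive ordinates $S$ is strictly decreasing (it drops by the multiplicity at each $\gamma$ and increases smoothly in between), so once $|S(t_0)|\geqslant n(I)/8$ at an interior point, $|S|$ stays $\geqslant n(I)/16$ on a neighborhood of $t_0$ of length comparable to the local gap between ordinates, which is $\gtrsim 1/\log x \gtrsim |I|/n(I)$; summing these neighborhoods over the $\gtrsim 1/\eta$ chosen blocks gives total measure $\gtrsim |I|$.

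The main obstacle I anticipate is precisely this last measure-theoretic step: converting "$|S|$ is large at many points" into "$|S|$ is large on a set of measure $\geqslant c|I|$". The cleanest route is to exploit the structure $S(t) = \frac{1}{\pi}\arg\zeta(\frac12+it)$ together with the fact that between consecutive zeros on the critical line (or, unconditionally, between consecutive ordinates $\gamma$) the function $S(t)+$ (smooth correction) is monotone, so the level set $\{|S|\geqslant \lambda\}$ near a point where $|S|$ is large automatically contains an interval whose length is bounded below by the reciprocal of the local density of ordinates. A slightly different and perhaps more robust approach avoiding fine structure of $S$ is to integrate: if $|S|< n(I)/8$ on all but measure $c|I|$ of $2I$, then $\int_{2I}|S(t)|\,\dif t \leqslant \frac{n(I)}{8}\cdot 2|I| + c|I|\cdot O(\log x)$, whereas a lower bound for $\int_I |S(t)|\,\dif t$ coming from the jump $S(b)-S(a)\gtrsim n(I)$ (via $\int_I S' = $ smooth part $- n(I)$, so $S$ must dip by $\gtrsim n(I)$ and hence $\int_I|S|\gtrsim n(I)|I|$) yields a contradiction for $c$ small enough. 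I would pursue this integral version first, as it sidesteps the delicate pointwise monotonicity bookkeeping and needs only \eqref{eq:iv10}, \eqref{eq:iv15}, and the bound $M'(t)\asymp\log x$ on the relevant range, together with the hypothesis $n(I)\geqslant |I|\log x$ to ensure the jump dominates the smooth variation.
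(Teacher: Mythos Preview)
Your opening move is exactly the paper's: write $n(I)=N(b)-N(a)=(M(b)-M(a))+(S(b)-S(a))+(f(b)-f(a))$ and bound the smooth piece. But you stall at a non-issue. On $[x,2x+x/T]$ one has $M'(t)=\frac{1}{2\pi}\log\frac{t}{2\pi}\leqslant \frac{1}{2\pi}\log x$, so $C_0=\frac{1}{2\pi}<\frac12$. Hence $S(b)-S(a)\geqslant n(I)-\frac{1}{2\pi}|I|\log x\geqslant\bigl(1-\tfrac{1}{2\pi}\bigr)n(I)>\tfrac12\,n(I)$ directly, and one of $S(b)\geqslant n(I)/4$ or $S(a)\leqslant -n(I)/4$ must hold. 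There is no ``subtlety'' here, and no need for the pigeonhole detour of your second paragraph.

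The real gap is the passage from ``$|S|$ large at a point'' to ``$|S|$ large on measure $\geqslant c|I|$''. You estimate the persistence interval as having length $\gtrsim 1/\log x\gtrsim |I|/n(I)$ and then try to sum many such intervals; that is too weak and the summation does not close. The correct (and much simpler) observation is a \emph{one-sided} Lipschitz bound pointing \emph{out of} $I$: if, say, $S(a)\leqslant -n(I)/4$, then for $t<a$ one has $N(t)\leqslant N(a)$ while $M(a)-M(t)\leqslant (a-t)\tfrac{\log x}{2\pi}$, so
\[
S(t)=N(t)-M(t)-f(t)\;\leqslant\;S(a)+(a-t)\tfrac{\log x}{2\pi}+o(1).
\]
This keeps $S(t)\leqslant -n(I)/8$ as long as $(a-t)\tfrac{\log x}{2\pi}\leqslant n(I)/8$, i.e.\ for $a-t\leqslant \tfrac{\pi n(I)}{4\log x}\geqslant \tfrac{\pi}{4}|I|$ by the hypothesis $n(I)\geqslant |I|\log x$. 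Intersecting with $2I$ gives an interval of length $|I|/2$ on which $|S|\geqslant n(I)/8$; the case $S(b)\geqslant n(I)/4$ is symmetric, moving right from $b$. This is precisely the paper's argument, and it needs only the single endpoint, not many sub-blocks.

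Finally, your ``integral version'' does not work as stated: a jump $S(b)-S(a)\gtrsim n(I)$ does not by itself force $\int_I|S|\gtrsim n(I)\,|I|$ (think of $S$ crossing zero once and being small on most of $I$), and the $O(\log x)$ term in your upper bound for $\int_{2I}|S|$ already competes with the main term since $c|I|\cdot\log x$ can be of the same order as $\tfrac{n(I)}{8}\cdot 2|I|$ under the bare hypothesis $n(I)\geqslant |I|\log x$. Drop that route and use the one-sided Lipschitz step above.
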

\begin{proof}
By the Riemann--von Mangoldt formula, the assumption $n(I)\geqslant |I| \log x$ implies that
\[ S(b)-S(a)\geqslant \frac{1}{2} |I| \log x . \]
This implies that if $S(b)\leqslant \frac{1}{4} |I| \log x$, then $S(a)\le - \frac{1}{4} |I| \log x$
in which case may choose a positive constant $c$ independent of $I$ such that the desired bound
$|S(t)|\geqslant n(I)/8$ holds on the interval $[a-c|I| \log x, a]$. On the other hand, if $S(b)> \frac{1}{4} |I| \log x$,
then $|S(t)|\geqslant n(I)/8$
holds on the interval  $[b, b+c|I| \log x]$.
\end{proof}

\begin{proof}[Proof of part (ii) of Proposition~\ref{pr:dyadic}] We divide the interval $(x,2x]$
into the same subintervals $I_j$ as in the preceding case. The lower bound of part is again proved
by the Cauchy--Schwarz inequality. Indeed, observing that the number of non-zero summands in the sum
over $j$ is at most of order $x\log x$, we find that
\begin{eqnarray*}
D(x)\;\asymp\;\; \frac{T}{x^2\log x} x\log x\sum_{j=1}^T n(I_j)^2\;\;\geqslant
\;\;\frac{T}{x^2\log x} (x\log x)^2\;\; =\;\; T\log x.
\end{eqnarray*}

We turn to the proof of the upper bound. To begin with, note that since
$x\leqslant T/\log T$, we have $|I_j| \log x\le 1$. This means that $I_j$
satisfies the condition of Lemma~\ref{lem:S} whenever  $n(I_j)\geqslant 1$. This fact allows us to make the following construction. Let $j_1$ be the smallest
$j$ such that $n(I_j)\geqslant 1$, and let $k_1$ be the largest $k$ such that
\[ n\left(\bigcup_{m=0}^{k-1} I_{j_1+m} \right)\geqslant  \frac{k x}{T} \log x. \]
Now choose $j_{\ell}$ and $k_{\ell}$ inductively such that $j_{\ell}$ is the smallest $j$
such that $j\geqslant j_{\ell-1}+k_{\ell-1}$ and $n(I_j)\geqslant 1$ while $k_\ell$ is the largest $k$ such that
\[ n\left(\bigcup_{m=0}^{k-1} I_{j_\ell+m} \right)\geqslant  \frac{k x}{T} \log x. \]
We terminate this iteration when $j_{\ell}+k_{\ell}\geqslant 2T$. We set
\[ J_{\ell}:= \bigcup_{m=0}^{k_{\ell}-1} I_{j_\ell+m} \]
and notice that $|J_{\ell}| \ll 1$ since
\[ n\left(I \right)=\frac{|I|}{2\pi}\log x + O(\log x). \]
It is also clear, by the maximality of $J_{\ell}$, that
\begin{equation} \label{eq:max} n(J_{\ell}) = (1+o(1))|J_{\ell}| \log x. \end{equation}

Plainly by the construction, since $x\leqslant T/\log T$, all the ordinates $\gamma $ in $[x,2x]$
will be covered by the intervals $J_{\ell}$, and we thus have
\begin{equation} \label{eq:ex} \sum_j n(I_j)^2 \leqslant \sum_{\ell} n(J_{\ell})^2. \end{equation}
Now, by a standard covering argument (see e.g. \cite[Proof of Lemma 4.4]{Ga}),
we can find a subcollection $\{ 2J'\}$ of the intervals $2J_{\ell}$\footnote{If $I\subset \R$
is an interval, $2I$ stands for the interval with same center and twice the length.}
such that no point in $[x,2x]$ belongs to more than two intervals $2J'$ and such that the union of  $\{ 2J'\}$ covers all the intervals $J_\ell$. In particular, we have
\[ \sum_{\ell} |J_{\ell} |^2 \ll \sum_{J'} |J'|^2. \]
Combining this with Lemma~\ref{lem:S} and \eqref{eq:max}, we get
\begin{align*} \sum_{\ell} n(J_{\ell})^2 \;&\ll  \;
 \log x  \sum_{\ell} (\log x) |J_{\ell}| \cdot  |J_{\ell}|
 \ll \log x  \sum_{J'} (\log x) |J'| \cdot  |J'| \\
& \ll \;\log x \int_x^{2x} |S(t)| dt \ll x \log x \sqrt{\log\log x} ,
 \end{align*}
where in the last step we used the Cauchy--Schwarz inequality and Selberg's mean value theorem\footnote{Alternatively, we could appeal to Ghosh's asymptotic formula for the first moment of $S(t)$ \cite{G}.} for $S(t)$.  Recalling \eqref{eq:ex} and \eqref{eq:sum}, we obtain the upper bound of part (ii).
\end{proof}

As an immediate consequence of the preceding analysis, we now obtain a mean square estimate
\footnote{The upper bound  $|G(\hf+it)|^2 \dif t \;\;  \ll\;\; T\log^2T$ was stated in \cite{Iv}
but with an  incomplete proof.}  for the function $G(s)$ on the critical line:
\begin{theorem}\label{th:bounds}
\quad $\displaystyle
T\log^2 T\;\;  \ll\;\; \int_1^T \left|G(\hf+it)\right|^2 \dif t \;\;  \ll\;\; T\log^2 T\sqrt{\log\log T}.
$
\end{theorem}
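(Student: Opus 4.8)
The plan is to work from the decomposition $G(s)=\sum_{0<\gamma\le X}\gamma^{-s}+R(s)$ with the choice $X=T\sqrt{\log\log T}/\log T$ made above, to dispose of $R$ by the bound \eqref{R}, and to handle the main sum by a dyadic decomposition fed into Proposition~\ref{pr:dyadic}. Since \eqref{R} gives $\int_1^T|R(\hf+it)|^2\dif t\ll T\log T\sqrt{\log\log T}$, which is of smaller order than both bounds to be proved, the triangle inequality for the $L^2([1,T])$ norm reduces the theorem to the two estimates $\int_1^T\big|\sum_{0<\gamma\le X}\gamma^{-1/2-it}\big|^2\dif t\gg T\log^2 T$ and $\int_1^T\big|\sum_{0<\gamma\le X}\gamma^{-1/2-it}\big|^2\dif t\ll T\log^2 T\sqrt{\log\log T}$. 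The lower one is immediate from \eqref{diag}: for every fixed $b\in(0,1)$ we have $X\ge T^b$ once $T$ is large, and since the integrand is even in $t$ and bounded on $[0,1]$, \eqref{diag} yields $\int_1^T\big|\sum_{0<\gamma\le X}\gamma^{-1/2-it}\big|^2\dif t\gg T\log^2 T$, hence $\int_1^T|G(\hf+it)|^2\dif t\gg T\log^2 T$.

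For the upper bound I would first invoke Lemma~\ref{le:termsofS} --- legitimate since $100\le X\le T$ for large $T$ --- to rewrite the main term as $\int_1^X\big((N(t+t/T)-N(t))T/t\big)^2\dif t$, then split the interval $(1,X]$ additively into dyadic blocks $(2^j,2^{j+1}]$ with $2^{j_0}\asymp 100$ and $2^{j_1}\asymp X$. Running the proof of Lemma~\ref{le:termsofS} on a single block $(2^j,2^{j+1}]$ in place of $(1,X]$ identifies the corresponding piece of the integral with $D(2^j)$ up to absolute constants; the only new wrinkle is that the windows $[t,t+t/T]$ spill across block boundaries, but this produces only bounded overlap and so is harmless after summation in $j$. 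Thus $\int_1^T\big|\sum_{0<\gamma\le X}\gamma^{-1/2-it}\big|^2\dif t\asymp\sum_{j_0\le j\le j_1}D(2^j)+O(T)$, and it remains to insert Proposition~\ref{pr:dyadic}. For the blocks with $2^j<T/\log T$, part~(ii) gives $D(2^j)\ll T\log(2^j)\sqrt{\log\log(2^j)}\ll Tj\sqrt{\log j}$, and $\sum_{j\le j_1}j\sqrt{\log j}\asymp j_1^2\sqrt{\log j_1}\asymp\log^2 T\sqrt{\log\log T}$. For the remaining blocks, those with $T/\log T\le 2^j\le X$, there are only $O(\log\log\log T)$ of them because $X/(T/\log T)=\sqrt{\log\log T}$, and part~(i) bounds each by $T\log^2 T$; their total is therefore $O\big(T\log^2 T\log\log\log T\big)=o\big(T\log^2 T\sqrt{\log\log T}\big)$. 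Adding back the contribution of $R$ completes the upper bound.

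The real difficulty lies in that last summation over scales. One must arrange the dyadic decomposition so that the per-scale bound of part~(ii) of Proposition~\ref{pr:dyadic}, which behaves like $Tj\sqrt{\log j}$ at scale $2^j$, sums over the $\asymp\log T$ relevant scales to exactly $T\log^2 T\sqrt{\log\log T}$ rather than to something larger, and so that the few scales near $2^j\asymp T$ --- where only the weaker bound $D(2^j)\ll T\log^2 T$ is at hand --- do not dominate; the latter is guaranteed precisely because $X$ has been chosen so small relative to $T$. By contrast, all of the substantive analytic input (Selberg's first and second moment bounds for $S(t)$, the Montgomery--Vaughan inequality, and the Riemann--von Mangoldt formula) is already absorbed into \eqref{R} and Proposition~\ref{pr:dyadic}, so the only remaining work beyond this bookkeeping is to check that the additive dyadic splitting of $\int_1^X\big((N(t+t/T)-N(t))T/t\big)^2\dif t$ is matched, up to absolute constants, by $\sum_j D(2^j)$.
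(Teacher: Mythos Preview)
Your proposal is correct and follows essentially the same route as the paper: choose $X=T\sqrt{\log\log T}/\log T$, discard $R$ via \eqref{R}, obtain the lower bound from \eqref{diag}, and for the upper bound decompose into dyadic blocks, apply Proposition~\ref{pr:dyadic} on each, and sum. The only cosmetic difference is in how the dyadic decoupling is justified: the paper invokes the quasi-orthogonality of non-adjacent dyadic blocks directly from \eqref{fourier} (or \eqref{help6}), whereas you pass through Lemma~\ref{le:termsofS} to the $N(t)$-integral and then localize it block by block; both routes yield $\int_1^T\big|\sum_{0<\gamma\le X}\gamma^{-1/2-it}\big|^2\dif t\asymp\sum_j D(2^j)$, and your subsequent summation (including the count of $O(\log\log\log T)$ blocks in $[T/\log T,X]$) matches the paper's.
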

\begin{proof}
We choose the length $X$ of the partial sum $\sum_{0<\gamma\leqslant X} \gamma^{-s}$ to be
$T\sqrt{\log \log T}/\log T$ so that the contribution from the remainder term is given by \eqref{R}.
By \eqref{help6}  (or directly by \eqref{fourier}) there is quasi-orthogonality between the
 dyadic parts of the sum (i.e., dyadic parts that are not adjacent are orthogonal with respect to a weighted norm,
 where the size of the weight is approximately $(1+|t/T|^2)$).
 We apply part (i) of Proposition~\ref{pr:dyadic} to suitable dyadic
 subintervals of $[T/\log T, T\sqrt{\log\log T}/\log T]$ and part (ii) to subintervals of $[1,T/\log T]$.
 Summing over all these dyadic intervals, we obtain the desired upper bound. The lower bound
 is a consequence of \eqref{diag} and \eqref{R}.
\end{proof}

We expect, partly in view of the very regular behavior of the ordinates predicted
by the random matrix analogy, that the true order for dyadic summands is akin to something like
\begin{equation}\label{dyad_est}
D(x) \asymp\begin{cases} x\log^2T & \textrm{for}\; T/\log T \leqslant x\leqslant T,\\
 T\log  x & \textrm{for}\; x\leqslant T/\log T.
\end{cases}
\end{equation}
We note that this prediction corresponds to the lower bounds in Proposition \ref{pr:dyadic}.
This leads us to the following
\begin{conjecture}\label{con:L^2}
\quad $\displaystyle\int_1^T |G(\hf+it)|^2 \dif t \;\;  \asymp\;\; T\log^2 T.$
\end{conjecture}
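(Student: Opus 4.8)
The plan is to sharpen the upper bound of Theorem~\ref{th:bounds} by removing the factor $\sqrt{\log\log T}$. Inspecting that proof, this factor is lost only through part~(ii) of Proposition~\ref{pr:dyadic}: the remainder term \eqref{R} and part~(i) of the proposition already contribute $\ll T\log^2T$ in the relevant ranges once the length $X$ of the partial sum is taken a little shorter. Concretely, I would first replace $X=T\sqrt{\log\log T}/\log T$ by $X\asymp T/\log T$, so that \eqref{R} gives
\[
\int_1^T\big|R(\hf+it)\big|^2\dif t\;\ll\;X\log^2 X+(T^2/X)\log\log X\;\ll\;T\log T\log\log T\;=\;o(T\log^2T),
\]
and, by the quasi-orthogonality of the dyadic pieces used in the proof of Theorem~\ref{th:bounds}, it then suffices to establish the conjecturally sharp dyadic bound $D(x)\ll T\log x$ for $100\le x\le X$, since $\sum_{x\,\mathrm{dyadic},\ \le X}T\log x\asymp T\log^2X\asymp T\log^2T$. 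By \eqref{eq:sum} we have $D(x)\asymp (T/x)\sum_j n(I_j)^2$ with the $I_j$ of length $x/T$, and
\[
\sum_j n(I_j)^2\;\le\;n\big((x,2x]\big)+P(x),\qquad P(x):=\#\big\{(\gamma,\gamma')\in(x,2x]^2:\ 0<|\gamma-\gamma'|\le x/T\big\}.
\]
Since $n((x,2x])\asymp x\log x$, the whole matter reduces to the pair bound $P(x)\ll x\log x$ for $100\le x\le X$.

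For $x\le X\asymp T/\log T$ the window $x/T$ is at most a bounded fraction of the mean spacing $2\pi/\log x$, so an ordinate in $(x,2x]$ has on average $O(1)$ partners within distance $x/T$; the content of $P(x)\ll x\log x$ is that no logarithmic loss is created by exceptional clusters. Writing $N(t+h)-N(t)=\frac1{2\pi}\int_t^{t+h}\log\frac u{2\pi}\dif u+\big(S(t+h)-S(t)\big)+O(h^2)$ with $h=x/T$, the bound is equivalent to the short-interval estimate $\int_x^{2x}\big(S(t+h)-S(t)\big)^2\dif t\ll x$. I would prove it by Montgomery's pair-correlation method. Take a nonnegative Beurling--Selberg majorant $\Phi\ge\chi_{[-x/T,\,x/T]}$ whose Fourier transform is supported in $[-\frac{\log x}{2\pi},\frac{\log x}{2\pi}]$; then $\int_\R\Phi=2x/T+2\pi/\log x\ll 1/\log x$, so that
\[
P(x)\;\le\;\sum_{\gamma,\gamma'\in(x,2x]}\Phi(\gamma-\gamma')\;=\;\int_{-\log x/2\pi}^{\ \log x/2\pi}\widehat\Phi(\xi)\,\Big|\sum_{x<\gamma\le 2x}e^{2\pi i\gamma\xi}\Big|^2\dif\xi\;\le\;\Big(\int_\R\Phi\Big)\int_{-\log x/2\pi}^{\ \log x/2\pi}\Big|\sum_{x<\gamma\le 2x}e^{2\pi i\gamma\xi}\Big|^2\dif\xi .
\]
After smoothing the cutoff, the explicit formula splits $\sum_{x<\gamma\le2x}e^{2\pi i\gamma\xi}$ into an archimedean main term, whose $L^2$ norm over all $\xi$ is $\ll x\log^2x$ by Plancherel, and (on the Riemann hypothesis) a term governed by a Dirichlet polynomial over primes $n\ll x$, whose mean square over the bounded frequency interval $[-\frac{\log x}{2\pi},\frac{\log x}{2\pi}]$ is $\ll x\log^2x$ by the Montgomery--Vaughan inequality \cite{MV}. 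Hence the last integral is $\ll x\log^2x$, and since $\int_\R\Phi\ll1/\log x$ we obtain $P(x)\ll x\log x$, and therefore $\int_1^T|G(\hf+it)|^2\dif t\ll T\log^2 T$, \emph{under the Riemann hypothesis}.

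The main obstacle is that this argument is conditional, and the gap looks essential. Unconditionally one has only $S(x)\ll\log x$ together with Selberg's $\int_T^{2T}S(t)^2\dif t\ll T\log\log T$, which give $\int_x^{2x}\big(S(t+h)-S(t)\big)^2\dif t\ll x\log\log x$ and hence only $P(x)\ll x\log x\sqrt{\log\log x}$---precisely the source of the $\sqrt{\log\log T}$ in Theorem~\ref{th:bounds}. Removing it amounts to controlling the clustering of zeros on scales below the mean spacing; on the Riemann hypothesis this is supplied by the work of Montgomery, Fujii and Goldston on the number variance of zeros, but unconditionally it seems out of reach. A second, more technical difficulty, even granting RH, is that Montgomery's estimate must be made uniform over the whole dyadic range $100\le x\le X$ and localized to the short interval $(x,2x]$ in place of $(0,T]$; in particular one must verify that frequencies $\xi$ near $0$, where the excess mass $2\pi/\log x$ of the extremal majorant is concentrated, do not overshoot the budget $x\log x$. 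Handling multiple zeros and the transition near $x\asymp T/\log T$ are comparatively minor points.
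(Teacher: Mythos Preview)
This statement is labeled a \emph{conjecture} in the paper; there is no proof to compare against. The authors establish the lower bound and the upper bound $T\log^2T\sqrt{\log\log T}$ in Theorem~\ref{th:bounds} and leave the removal of $\sqrt{\log\log T}$ open, motivated by the predicted dyadic behavior \eqref{dyad_est}. The only additional evidence they record is Remark~\ref{rem:moments}: on RH, the Montgomery--Vaughan inequality together with the separation $\gg 1/\log\log T$ handles the partial sum over $T/\log T\le\gamma\le T$, but the range $\gamma\le T/\log T$ is not addressed.

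Your proposal is therefore not a rival proof but a plan for a \emph{conditional} argument under RH, aimed precisely at that untreated range. The reduction of $D(x)\ll T\log x$ to the pair bound $P(x)\ll x\log x$ for $x\le T/\log T$ is correct, and identifying this with the number-variance estimate $\int_x^{2x}(S(t+h)-S(t))^2\dif t\ll x$ at $h=x/T$ is on target. On RH such estimates are indeed accessible via Montgomery's form factor (work of Fujii, Gallagher--Mueller, Goldston), so the Beurling--Selberg/explicit-formula route you outline is the right engine, and a conditional version of the conjecture along these lines is plausible. The uniformity issue you flag is genuine, however: Montgomery's asymptotic is stated for zeros in $(0,T]$ at scale $\log T$, whereas you need the localized analogue on $(x,2x]$ at scale $\log x$ uniformly in $x$, and since the window $x/T$ drops far below the mean spacing when $x=o(T)$, what is actually needed is the \emph{integrated} form factor over $|\alpha|\le1$ (including control of the $T^{-2|\alpha|}\log T$ spike near $\alpha=0$), not the pointwise asymptotic. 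These points are not covered by the one-line appeals in your sketch. Unconditionally, as you acknowledge, nothing beyond Selberg's second moment for $S$ is available and the conjecture remains open; your proposal does not, and does not claim to, settle it.
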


\begin{remark}\label{rem:moments}
On the Riemann hypothesis, we may split the sequence of ordinates $\gamma$
in the range $T/\log T\leqslant \gamma \le T$ in $O\left(\log T/\log\log T\right)$
sequences that are all separated with separation constant $\geqslant 1/\log\log T$.
Using again the Montgomery--Vaughan inequality, we then obtain the conjectured bound for the sum over $\gamma$ in this restricted range, namely
\[ \int_{1}^{T} \Bigl|\sum_{T/\log T\leqslant \gamma \leqslant T} \gamma^{-1/2 -it} \Bigr|^2 \dif t
\;\ll\;  T\log^2 T . \]
\end{remark}

\section{On super zeta functions }\label{se:mellin}

In 1917 Mellin  \cite{M} considered  a class of zeta functions that are defined by using the Riemann zeta zeros as the building block of new Dirichlet series. For example, he established meromorphic continuation to the whole complex plane for a class of such functions including the function
$$
Z(s):=\sum_\rho{\rho^{-s}},
$$
where the sum is taken over all nontrivial zeros of the Riemann zeta function. Voros  has devoted a monograph \cite{Vo} on  generalisations of such functions, for which he  coins the name
\emph{super zeta functions}\footnote{We urge the reader to take a look at  Frankenhuijsen's MathSciNet review of the book, where Serge Lang's point of view of ``climbing the ladder'' is explained.}.

For the convenience of the reader, we now  sketch a proof of the existence of a meromorphic extension of the function
\begin{equation}\label{eq:mel}
 M_\alpha(s)\;:=\;\sum_{\rho} (\alpha-\rho)^{-s}.
 \end{equation}
to the whole complex plane. This function is initially well-defined in the half-plane $\sigma >1$, as one sets $(\alpha-w)^{-s}:=e^{-s \log (\alpha-w)}$
for $w$ in $\mathbb{C}\setminus [\alpha, +\infty)$, using the convention that $\log(\alpha-w)$ is real-valued for
real $w<\alpha$.
 We will will consider only  $M_\alpha$
with  $\alpha >-2$, although we could consider the more general case.
However, our main interest lies in the case $\alpha= 1/2$, which under the Riemann hypothesis yields another approach to Delsarte's result. We refer to \cite[Section 7]{Vo} which contains a detailed discussion of the analytic continuation,
and also note that \cite[Appendix D]{Vo} provides an English translation
of Mellin's work \cite{M} which was originally written in German.

Clearly \eqref{eq:mel} defines a function analytic in the half-plane $\sigma>1$.  Set $c:=\min(1, \alpha)$ and
let $\eta_{\alpha}$ be the ``Hankel contour'' going from $+\infty$ below the
real line to $c-i\varepsilon$, where it follows the semicircle of radius $\varepsilon<2+\alpha$ centred at $c$ to the left of $c$ to $c+i\varepsilon$, and then going back to $+\infty$. Then the integral
$$
E_\alpha(s)\; :=\;  \frac{1}{2\pi i} \int_{\eta_{\alpha}} \frac{\zeta'(w)}{\zeta(w)} (\alpha-w)^{-s} \dif w
$$
defines an entire function $E_\alpha$, by the rapid decay of $\zeta'(w)/\zeta(w)$.
For $s>1$, the contour can be moved into the left half-plane and towards $-\infty$ in the usual way and therefore
\begin{equation} \label{eq:cont} \zeta_{\alpha}(s)=E_{\alpha}(s)-\sum_{n=0}^{\infty} (\alpha+2+2n)^{-s}=E_{\alpha}(s)-2^{-s}\zeta(s,\alpha/2+1), \end{equation}
where as usual $\zeta(s,q)$ is the Hurwitz zeta function.
The meromorphic continuation follows since the Hurwitz zeta function is known to
be a meromorphic function in $\mathbb{C}$ which is analytic in $\mathbb{C}\setminus\{1\}$
with a simple pole of residue $1$ at $s=1$.

\begin{remark}\label{rem:mellon} We notice that in the case $\alpha>1$,
we may write
\[ E_{\alpha}(s)\; =\;(\alpha-1)^{-s}+ \frac{1}{2\pi i} \int_{\eta'_{\alpha}} \frac{\zeta'(w)}{\zeta(w)} (\alpha-w)^{-s} \dif w,\]
where the modified contour $\eta'_{\alpha}$ stays to the right of the pole at $1$.
We may then substitute the Dirichlet series for $\zeta'(s)/\zeta(s)$ in the latter integral and obtain
the representation (see \cite[formula (7.28)]{Vo})
\[
\zeta_{\alpha}(s)\;\; =\;(\alpha-1)^{-s}\; -\frac{1}{\Gamma(s)}\sum_{n=2}^{\infty} \frac{\Lambda(n) (\log n)^{s-1}}{n^{\alpha}} \;\; - \;\;2^{-s}\zeta(s,\alpha/2+1),
\]
as can be seen by employing the formula
\begin{equation}\label{eq:integral}
\frac{1}{2\pi i} \int_{\eta_{\alpha}} n^{-w} (\alpha-w)^{-s}\dif w\; =\; \frac{-(\log n)^{s-1}}{n^{\alpha}\Gamma (s)}.
\end{equation}
To verify \eqref{eq:integral}, we may first assume that $0<s<1$ since the general
case then follows by analytic continuation. When $0<s<1$, we may push the integral to the interval $[\alpha,\infty)$, which produces an additional factor of $-2\sin(\pi s).$ The integral then reduces  to the definition of $\Gamma(1-s)$, and the rest follows by Euler's reflection formula
$$
\sin(\pi s) \;=\; \frac{\pi}{ \Gamma(s)\Gamma(1-s)}.
$$

We note also that other functions than $\rho^{-s}$ may be chosen. As an example,
we  may take $F(s):=\exp\bigl((\alpha-s)^c\bigr)$ with $0<c<1$.
Then a modification of the above  augment verifies that the  ``zeta function''
\[ Z_F(s):=\sum_{\rho} {F(\rho)}^{-s} \]
has an analytic extension to $\mathbb{C}$.
\end{remark}

\bibliographystyle{amsalpha}

\begin{thebibliography}{ABC93}
\bibliographysize{small} 

\bibitem{Ch1} I.~C. Chakravarty, \emph{The secondary zeta-functions},
J. Math. Anal. Appl. \textbf{30} (1970), 280--294.

\bibitem{Ch2} I.~C. Chakravarty,  \emph{Certain properties
of a pair of secondary zeta-functions},
J. Math. Anal. Appl. \textbf{35} (1971), 484--495.


\bibitem{D} J. Delsarte, \emph{Formules de Poisson avec reste}, J. Analyse Math. \textbf{17} (1966), 419--431.



\bibitem{Fu1} A. Fujii, \emph{The zeros of the zeta function and Gibbs's
phenomenon}, Comment. Math. Univ. Sancti Pauli \textbf{32} (1983), 229--248.

\bibitem{Fu2} A. Fujii, \emph{On the zeros of the Riemann zeta function}, Comment. Math. Univ. St. Paul.
\textbf{51} (2002), 1--17.

\bibitem{Ga} J. Garnett, \emph{Bounded Analytic Functions}, Revised first edition, Graduate Texts in Mathematics \textbf{236}, Springer, New York, 2007.

\bibitem{G} A. Ghosh, \emph{On Riemann's zeta function---sign changes of $S(T)$}, Recent progress in analytic number theory, Vol. 1 (Durham, 1979), pp. 25--46, Academic Press, London--New York, 1981.





\bibitem{I} A. Ivi\'{c}, \emph{The Riemann Zeta-Function. Theory and Applications},
Reprint of the 1985 original [Wiley, New York], Dover Publications, Inc., Mineola, NY, 2003.


\bibitem{Iv} A. Ivi\'{c}, \emph{On certain sums over ordinates of zeta-zeros},
Bull. Cl. Sci. Math. Nat. Sci. Math. \textbf{26} (2001), 39--52.

\bibitem{M} Hj. Mellin, \emph{\"{U}ber die nullstellen der Zetafunktion},
Ann. Acad. Sci. Fenn. \textbf{A 10} no. 11 (1917).

\bibitem{MV} H.~L.~Montgomery and R.~C.~Vaughan, \emph{Hilbert's inequality}, J. Lond. Math. Soc. \textbf{8} (1974), 73--82.















\bibitem{Sel} A. Selberg, \emph{On the normal density of primes in small intervals, and the difference between consecutive primes}, Arch. Math. Naturvid. \textbf{48} (1943), 87--105.


\bibitem{Sp} R. Spira, \emph{Zeros of Hurwitz zeta functions}, Math. Comp. \textbf{30} (1976), 863--866.


\bibitem{Te} G. Tenenbaum, \emph{Introduction to Analytic and Probabilistic Number Theory}, Cambridge Studies in Advanced Mathematics \textbf{46}, Cambridge University Press, Cambridge, 1995.


\bibitem{T} E. C. Titchmarsh, \emph{The Theory of the Riemann Zeta-Function}, 2nd Edition, Oxford University Press, New York, 1986.





\bibitem{Vo} A. Voros, \emph{Zeta Functions over Zeros of Zeta Functions},
Lecture Notes of the Unione Matematica Italiana~\textbf{8}, Springer-Verlag, Berlin; UMI, Bologna, 2010.
\end{thebibliography}

\authoraddresses{
Andriy Bondarenko \\
Department of Mathematical Sciences, Norwegian University of Science and Technology (NTNU), NO-7491 Trondheim, Norway \\
\email andriybond@gmail.com

Aleksandar Ivi\'c \\
Serbian Academy of Sciences and Arts, Knez Mihailova 35, 11000 Beograd,
Serbia \\
\email aleksandar.ivic@rgf.bg.ac.rs

Eero Saksman \\
Department of Mathematical Sciences, Norwegian University of Science and Technology (NTNU),
NO-7491 Trondheim, Norway {\rm and} \\ University of Helsinki,
Department of Mathematics and Statistics, P.O. Box 68, FIN-00014 University of Helsinki, Finland \\
\email eero.saksman@helsinki.fi

Kristian Seip \\
Department of Mathematical Sciences, Norwegian University of Science and Technology (NTNU),
NO-7491 Trondheim, Norway \\
\email kristian.seip@ntnu.no}

\end{document}